\algnewcommand\OUTER{\item[\textbf{OUTER CYCLE}]}%
\algnewcommand\INNER{\item[\textbf{INNER CYCLE:}]}%
\newcolumntype{C}[1]{>{\Centering}m{#1}}
\DeclareMathOperator*{\argmin}{argmin}
\numberwithin{equation}{section}
\theoremstyle{plain}
  \newtheorem{theorem}{Theorem}[section]
\theoremstyle{remark}
\theoremstyle{defn}
\newtheorem{proposition}[theorem]{Proposition}
\newtheorem{lem}[theorem]{Lemma}
\newtheorem{remark}{Remark}[section]
\newtheorem{oss}{Remark}
\def\to{\rightarrow}
\def\eps{\varepsilon}
\newcommand{\R}{{\mathbb R}}
\title[]{On a monotone scheme for nonconvex nonsmooth optimization with applications to fracture mechanics}
\thanks{This work was  supported by the ERC advanced grant $668998$ (OCLOC) under the EU's H$2020$ research programme. }
\author{Daria Ghilli}
 \address{Daria Ghilli, University of Graz, Institute of Mathematics and Scientific Computing,  Universit{\"a}tsplatz 3, Austria}
\email{daria.ghilli@uni-graz.at}
\author{Karl Kunisch}
\address{Karl Kunisch, University of Graz, Institute of Mathematics and Scientific Computing,  Universit{\"a}tsplatz 3, Austria \\
Johann Radon Institute for Computational and Applied Mathematics (RICAM), Austrian Academy of Sciences, Altenbergerstrasse 69, Linz, Austria\\}
\email{karl.kunisch@uni-graz.at}
\begin{document}

\maketitle

\begin{abstract}
A general class of   nonconvex optimization problems  is considered, where the penalty is the composition of a linear operator with a nonsmooth nonconvex mapping, which is concave on the positive real line.   The necessary optimality condition of a regularized version of the original problem is solved by means of a monotonically convergent
scheme.
Such problems arise in  continuum mechanics, as for instance cohesive fractures, where singular behaviour is usually modelled by nonsmooth nonconvex energies. The  proposed algorithm is successfully tested for  fracture mechanics problems. Its performance is also compared to two alternative algorithms for nonsmooth nonconvex optimization arising in optimal control and mathematical imaging.
\end{abstract}

\keywords{Nonsmooth nonconvex optimization \and monotone algorithm \and fracture mechanics \and sparse recovery}


\section{Introduction}

In this paper we investigate a class of nonconvex and nonsmooth optimization problems, where the penalty is the composition of a nonsmooth nonconvex mapping with a linear operator and the smooth part is a least squares type   term.

Similar optimization problems  in the case where the operator inside the penalty coincides with the identity matrix have attracted increasingly attention due to their applications to sparsity of solutions, feature selection, and many other related fields as e.g. compressed sensing, signal processing, and machine learning (see e.g. \cite{7,14}).  The convex nonsmooth case of the $\ell^1$ norm has gained large popularity and has been thoroughly studied. The convexity allows to formulate efficient and globally convergent algorithms to find a numerical solution.  Here we mention \cite{10,46} where the basis pursuit and the Lasso problems were  introduced to solve $\ell^1$ minimization problems.

Recently  increased interest has arisen towards nonconvex and nonsmooth penalties, such as  the $\ell^\tau$ quasi-norm, with $\tau$ larger or equal to zero and less than $1$ (see e.g. \cite{BRLORE,HIWU13,KI,KKR,LI,OHDOBRPO15}), the smoothly clipped absolute deviation (SCAD) \cite{FL,JJLR}, and the minimax concave penalty (MCP) \cite{CHZ,JJLR}. The nonconvexity  has been shown to provide some advantages with respect to the convex models. For example, it allows to require less data in order to recover exactly the solution (see e.g. \cite{CS,FLai,45}) and it tends to produce unbiased estimates for large coefficients \cite{51,FL,FP}.
Note that all the previously mentioned works deal with the particular case where the operator coincides with the identity. 

Nonconvex optimization  problems as we consider, where the operator inside the penalty is different form the identity, arise also in the modelling of cohesive fractures in continuum mechanics, where the concavity of the penalty is crucial to model the evolution of the fracture energy released within the growth of the crack opening. Here the operator  is  of  importance to model  the jump of the displacement between the two lips of the fractures. We refer to \cite{PI13,GK,A1,A2} and subsection \ref{subsecnum} for more details.

 The study of these problems for  nonconvex penalties, including  $\ell^\tau$, with $\tau$ strictly positive and less than $1$, the SCAD and the MCP functionals, and for  linear operators  not necessarily coinciding with the identity, is also motivated by applications different from those arising in fracture mechanics. For example in imaging the $\ell^\tau$ quasi-norm,  with $\tau$ strictly positive and less than $1$, of the numerical gradient of the solution has been  proposed  as a nonconvex extension of the total  variation (like TV) regularizer  (see e.g \cite{HIWU13,OHDOBRPO15}) in order to reconstruct piecewise smooth solutions.
The SCAD and the MCP penalties have been used  for high dimensional regression and variable selection  methods in high-throughput biomedical studies \cite{BRHU}. We mention also that  the SCAD has  been proposed as a nonconvex penalty in the network estimation to attenuate the bias problem \cite{FANFENG}.

The main difficulties in the analysis of these problems  come from the interplay between the nonsmoothness, the nonconvexity, and the coupling between coordinates which is described by the operator inside the penalty. Since standard algorithms are not readily available, the  resolution of these problems  requires the development of new analytical and numerical techniques.

In the present paper we propose a monotonically  convergent algorithm to solve this kind of problems. This is an iterative procedure which solves the necessary optimality condition of a regularized version of the original problem.
A remarkable property of our scheme is the strict monotonicity of the functional along the sequence of iterates. The convergence of the iteration procedure is proved under the same assumptions that guarantee existence of solutions.

The performance of the scheme is successfully tested to simulate the evolution of cohesive fractures for several different test configurations. Then we turn to an issue of high relevance, namely the comparison between two alternative algorithms,  the GIST "General Iterative Shrinkage and Thresholding" algorithm for $\ell^\tau$ minimization,  with $\tau$ strictly positive and less than $1$  and  the FISTA "Fast Iterative Shrinkage-Thresholding Algorithm"  for $\ell^1$ minimization. The comparison is carried out with respect to the infimal value reached by the iteration procedure and with respect to computing time. Our results show that the monotone algorithm is able to reach a smaller value of the objective functional that we consider when compared to the one of GIST. 
Note that, differently from  GIST, the monotone scheme solves a system of nonlinear equations at each iteration level.
We remark that in \cite{LLSZ}  GIST was compared with the IRLS "iterative reiweighted least squares" algorithm, which is another popular scheme for $\ell^\tau$ minimization,  with $\tau$ strictly positive and less than $1$. The results of \cite{LLSZ} show that  GIST and IRSL have nearly the same performance, with only one difference which is speed, where GIST appears to be the faster one.

An analogous procedure to the one proposed in the present paper was developed in \cite{GK}   to solve similar problems  where the nonconvex penalty coincides with $\ell^\tau$ quasi-norm,  with $\tau$ strictly positive and less than or equal to $1$. 
With respect to \cite{GK}, in the present paper  we deal with more general concave penalties. Moreover, we carry out several numerical experiments for diverse situations in  cohesive  fracture mechanics, comparing the behaviours for different concave penalties such as the SCAD, the MCP and the $\ell^\tau$ penalty,  with $\tau$ strictly positive and less than $1$.
Finally in the present paper we compare the performance of the scheme with that of  GIST.


Let us recall some further literature concerning nonconvex nonsmooth optimization of the type investigated in the present paper.
In \cite{JJLR,J} a primal-dual active set type algorithm  has been developed,  in the case the operator inside the penalty coincides with the identity.
For  more references in this case we refer to  \cite{GK}.
Concerning $\ell^\tau$ minimization,  with $\tau$ larger than or equal to zero and less than or equal to $1$ when  the operator is not the identity, other techniques have  recently been investigated. Here we mention iteratively reweighted convex majorization algorithms \cite{OHDOBRPO15}, alternating direction method of multiplier (ADMM) \cite{LI} and finally a Newton-type solution algorithm for a regularized version of the original problem \cite{HIWU13}.
Finally we recall the paper \cite{A1}, where a novel algorithm for nonsmooth nonconvex optimization with linear constraints is proposed, consisting of a generalization of the well-known nonstationary augmented Lagrangian method for convex optimization. The convergence to critical points is proved and several tests were made for  free-discontinuity variational models, such as the Mumford-Shah functional. The nonsmoothness considered in \cite{A1} does not allow singular behaviour of the type that the $\ell^\tau$ term,  with $\tau$ larger than or equal to zero and strictly less than $1$ does.

The paper is structured as follows. In Section \ref{existence}, subsection \ref{subsecass} we state the precise assumptions, in subsection \ref{subsecex} we prove existence for the problem in consideration, in subsection \ref{subsecmon} we propose the monotone scheme to solve a regularized version of the original problem and we prove its convergence, and finally in subsection \ref{subsecas} we study the asymptotic behavior as the concavity and regularization parameters go to zero. In Section \ref{alg} we present the precise form of our scheme. In subsection \ref{subsecnum} we discuss our numerical experience for cohesive evolution of fracture mechanics  and in subsection \ref{compg} we compare the performance of our scheme to that of GIST for three different test cases, the academical M-matrix example, an optimal control problem and a microscopy imaging example.

 \section{Existence and monotone algorithm}\label{existence}
\subsection{Assumptions}\label{subsecass}
We consider
\begin{equation}\label{optprobphi}
\min_{x \in \R^n}J(x)=\frac{1}{2}|A x-b|_2^2+\sum_{i=1}^r\phi(\Lambda x)_i,
\end{equation}
where  $A \in \mathbb{M}^{m\times n}$, $\Lambda \in \mathbb{M}^{r\times n},$ $b \in \R^m$   and $\phi(t): \R \to \R^+$ satisfies
\[
{\bf (H)}
\begin{cases}
\text{(i)}\; &\phi \mbox{ is even with } \phi(0)=0, \mbox{ nondecreasing for } t\geq 0 \mbox{ and continuous};\\
\text{(ii)}\; &\phi \mbox{ is differentiable on } ]0,\infty[;\\
\text{(iii)}\; &\phi \mbox{ is concave on } \R^+;\\
\text{(iv)}\; & \mbox{ there exists a neighbourhood of zero where the function } t \to \frac{\phi'(t)}{t} \mbox{ is monotone};\\
\end{cases}
\]
Above monotonically increasing or decreasing are admitted.
Throughout the rest of the paper we will use  the notation 
$$
\Phi(\Lambda x):=\sum_{i=1}^r\phi(\Lambda x)_i.
$$

Under assumption ${\bf (H)}$, the following two cases are analysed: 
\begin{itemize}
\item[(a)]
\begin{itemize}
\item[(i)]
$\phi(t)$ is a constant when $|t|\geq t_0$ for some $t_0>0$;
\item[(ii)]
A is coercive, i.e. $\mbox{rank}(A)=n$.
\end{itemize}
\vspace{0.2cm}

\item[(b)]
\begin{itemize}
\item[(i)]
for some $\gamma>0$ it holds $\phi(at)=a^\gamma \phi(t)$ for all $t \in \R$ and $a \in \R^+$;
\item[(ii)]
$
\mbox{Ker}(A)\cap\mbox{Ker}(\Lambda)=\{0\}.
$
\end{itemize}
\vspace{0.2cm}

\end{itemize}

Three popular examples of nonconvex penalties which satisfy ${\bf (H)}$ and the assumptions on $\phi$ in (a) or (b)  are the following:
\medskip
 \begin{description}
 \item[${\bm \ell^\tau}$]  $\tau \in (0,1], \lambda >0$
 \begin{equation}\label{p}
 \phi(t)=\lambda |t|^\tau,
 \end{equation}
 satisfying $(b)(i)$.
 \vspace{0.5cm}
 
 \item[SCAD]  $\tau >1,  \lambda >0$
 \begin{equation}\label{SCAD}
\phi(t)=\left\{
\begin{array}{lll}
\frac{\lambda^2(\tau+1)}{2} \quad &|t|\geq \lambda \tau\\
\frac{\lambda \tau |t|-\frac{1}{2}(t^2+\lambda^2)}{\tau-1}\quad &\lambda < |t|\leq \lambda \tau\\
\lambda |t| \quad &|t|\leq \lambda,
\end{array}
\right.\,
\end{equation}
 satisfying $(a)(i)$.
 \vspace{0.5cm}
 
 \item[MCP] $\tau>1,  \lambda >0$
 \begin{equation}\label{MCP}
\phi(t)=\left\{
\begin{array}{lll}
\lambda(|t|-\frac{t^2}{2\lambda\tau})\quad &|t|< \lambda \tau\\
\frac{\lambda^2\tau}{2}\quad &|t|\geq \lambda \tau,
\end{array}
\right.\,
\end{equation}
 satisfying $(a)(i)$.
 \end{description}
\begin{remark}\rm{
The singularity at the origin of the three penalties leads to sparsity of the solution. In the SCAD and the MCP, the derivative vanishes for large values to ensure unbiasedness. 

Problems as \eqref{optprobphi} with $\phi$ given by the $\ell^\tau$-quasi norm with $\tau \in (0,1)$ were studied in \cite{GK}. For more details on its statistical properties, such as variable selection and oracle property, of the $\ell^\tau$-quasi norm, we refer  to \cite{CS,FLai,HHM,KKF}. 

The SCAD (smoothly clipped absolute deviation) (\cite{FL,FP}) has raised interest in relation to variable selection consistency and asymptotic estimation efficiency (see \cite{FP}). It  can be obtained upon integration of the following formula for $\tau>2$
$$
\phi(t)=\lambda \int_0^{|t|}\min\left(1,\frac{\max(0,\lambda \tau-|s|)}{\lambda (\tau-1)}\right) ds.
$$
The MCP (minimax concave penalty) \cite{CHZ} can be recovered from the following formula
$$
\phi(t)=\lambda \int_0^{|t|} \max\left (0,1-\frac{|s|}{\lambda \tau}\right) ds
$$
and minimizes the maximum concavity $\sup_{0<t_1<t_2}\frac{\left(\phi'(t_1)-\phi'(t_2)\right)}{(t_2-t_1)}$ subject to the constraints $\phi'(t)=0$ for any $|t|\geq \lambda \tau$ (unbiasedness) and $\phi'(0^{\pm})=\pm \lambda$ (feature selection). The condition $\tau>1$ ensures the wellposedness of the thresholding operator.

}
\end{remark}

\subsection{Existence}\label{subsecex}
First we prove coercivity of the functional J in \eqref{optprobphi} under assumptions (a) or (b).
\begin{lem}\label{coer}
Let assumptions \textbf{(H)} and either (a) or (b) hold. Then the functional J in \eqref{optprobphi} is coercive.
\end{lem}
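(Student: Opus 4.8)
The plan is to prove coercivity of $J$ separately under assumption (a) and under assumption (b), in both cases exploiting only that $\phi\ge 0$ together with the relevant structural hypothesis; conditions (H)(iii)--(iv) will not be needed here.

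\emph{Case (a).} This is immediate. Since $\mathrm{rank}(A)=n$, the smallest singular value $\sigma$ of $A$ is positive and $|Ax|_2\ge\sigma|x|_2$, so $|Ax-b|_2\ge\sigma|x|_2-|b|_2$. Hence the least squares term alone already tends to $+\infty$ as $|x|_2\to\infty$, while $\sum_{i=1}^r\phi(\Lambda x)_i\ge 0$ only helps; thus $J$ is coercive. (In fact the boundedness of $\phi$ for $|t|\ge t_0$ is not even used in this case.)

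\emph{Case (b).} Here I would argue by contradiction. Suppose there is a sequence $(x_k)$ with $|x_k|_2\to\infty$ along which $J(x_k)$ remains bounded. Write $x_k=|x_k|_2\,y_k$ with $|y_k|_2=1$ and, passing to a subsequence, let $y_k\to y$, $|y|_2=1$. The positive homogeneity $\phi(at)=a^\gamma\phi(t)$ turns the penalty into $\sum_{i=1}^r\phi(\Lambda x_k)_i=|x_k|_2^{\gamma}\,\Phi(\Lambda y_k)$, and also gives $\phi(t)=\phi(1)|t|^\gamma$, so that $\Phi(z)>0$ whenever $z\ne 0$ (the degenerate case $\phi\equiv 0$ being excluded). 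Now distinguish two cases. If $\Lambda y\ne 0$, then by continuity $\Phi(\Lambda y_k)\to\Phi(\Lambda y)>0$, hence $\Phi(\Lambda y_k)\ge c>0$ for $k$ large, and since $\gamma>0$ the penalty term $|x_k|_2^{\gamma}\Phi(\Lambda y_k)\to+\infty$, contradicting boundedness of $J(x_k)$. If instead $\Lambda y=0$, then $y\notin\mathrm{Ker}(A)$ because $\mathrm{Ker}(A)\cap\mathrm{Ker}(\Lambda)=\{0\}$ and $y\ne 0$; thus $|Ay|_2>0$, and since $|Ax_k-b|_2=|x_k|_2\,\big|Ay_k-|x_k|_2^{-1}b\big|_2$ with $Ay_k-|x_k|_2^{-1}b\to Ay$, the least squares term diverges to $+\infty$ — again a contradiction. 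Hence no such sequence exists and $J$ is coercive.

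The only point that requires a little care is the dichotomy in case (b): homogeneity is precisely what rewrites the penalty as $|x_k|_2^{\gamma}\Phi(\Lambda y_k)$, so that it detects the component of the blow-up direction $y$ lying outside $\mathrm{Ker}(\Lambda)$, while the transversality condition $\mathrm{Ker}(A)\cap\mathrm{Ker}(\Lambda)=\{0\}$ guarantees that the complementary directions are controlled by the least squares term. I do not anticipate any genuine obstacle beyond bookkeeping the two cases.
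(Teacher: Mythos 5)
Your proof is correct and follows essentially the same route as the paper: case (a) is immediate from $\mathrm{rank}(A)=n$, and case (b) is the same contradiction argument via normalization $x_k=|x_k|_2y_k$ and homogeneity, with the paper concluding that the limit direction lies in $\mathrm{Ker}(A)\cap\mathrm{Ker}(\Lambda)$ all at once where you split into the dichotomy $\Lambda y\neq 0$ versus $\Lambda y=0$. Your explicit remark that $\phi(t)=\phi(1)|t|^{\gamma}$, so that the argument needs $\phi\not\equiv 0$, makes visible an implicit assumption the paper also relies on.
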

\begin{proof}
Under assumption (a), the coercivity of J follows trivially. Suppose now that $(c)$ holds. Then the result follows by similar arguments to that used in  \cite{GK}, Theorem  1 (where $\phi$ is the $\ell^\tau$ quasi-norm). We proceed by contradiction and we suppose that $|x_k|_2\to + \infty$ and $J(x_k)$ is bounded. For each $k$, let $x_k=t_kz_k$ be such that $t_k\geq 0, x_k \in \R^n$ and $|z_k|_2=1. $ By (b) (i) we have
$$
\Phi(\Lambda z_k)= \frac{1}{t_k^\gamma}\Phi(\Lambda x_k)
$$
and then since $t_k \to + \infty$ and $J(x_k)$ is bounded,  we have for $k \to + \infty$
$$
\displaystyle 0\leq |A z_k|_2^2+ \Phi(\Lambda z_k)=\frac{1}{ t_k^2}|A x_k|_2^2+ \frac{1}{t_k^\gamma}\Phi(\Lambda x_k)\leq \frac{1}{t_k^{\min\{2,\gamma\}}}\left( |A x_k|_2^2+\Phi(\Lambda x_k)\right) \to 0
$$
and hence
$$
\lim_{k \to + \infty} |Az_k|_2^2+\Phi(\Lambda z_k)=0.
$$
By compactness, the sequence $\{z_k\}$ has an accumulation point $\bar z$ such that $|\bar z|=1$ and $\bar z \in \mbox{Ker}(A)\cap \mbox{Ker}(\Lambda)$, which contradicts (c) (ii). 
\end{proof}
In the following theorem we state the existence of at least a minimizer to \eqref{optprobphi} under either (a) or (b). We omit the proof since it follows directly by the continuity and  coercivity of the functional in \eqref{optprobphi}.
\begin{theorem}\label{thmex}
Let assumptions \textbf{(H)} and either  (a) or  (b) hold. Then there exists at least one minimizer to problem \eqref{optprobphi}.
\end{theorem}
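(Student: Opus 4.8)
The plan is to apply the direct method of the calculus of variations in the finite-dimensional setting, i.e. the classical Weierstrass argument. The two ingredients needed are continuity and coercivity of $J$, and coercivity has already been secured in Lemma \ref{coer} under \textbf{(H)} together with (a) or (b). So the first step is to verify continuity of $J$ on $\R^n$. The least-squares part $x\mapsto \tfrac12|Ax-b|_2^2$ is a composition of an affine map with a quadratic, hence continuous; the penalty part $x\mapsto \Phi(\Lambda x)=\sum_{i=1}^r\phi((\Lambda x)_i)$ is continuous because $\Lambda$ is linear (thus continuous) and $\phi$ is continuous on all of $\R$ by \textbf{(H)}(i) — in particular at the origin, the only point where differentiability of $\phi$ is not assumed. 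Therefore $J$, being a finite sum of continuous functions, is continuous on $\R^n$.

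Next I would take a minimizing sequence $\{x_k\}\subset\R^n$, that is $J(x_k)\to J^\ast:=\inf_{x\in\R^n}J(x)$, and observe that $J^\ast$ is finite since $0\le J^\ast\le J(0)=\tfrac12|b|_2^2<\infty$ (using $\phi(0)=0$ from \textbf{(H)}(i)). By coercivity from Lemma \ref{coer}, the sequence $\{x_k\}$ is bounded: otherwise a subsequence would satisfy $|x_k|_2\to\infty$, forcing $J(x_k)\to+\infty$ and contradicting $J(x_k)\to J^\ast<\infty$. By the Bolzano--Weierstrass theorem in $\R^n$ there is a subsequence $x_{k_j}\to\bar x$ for some $\bar x\in\R^n$, and by continuity of $J$ we get $J(\bar x)=\lim_j J(x_{k_j})=J^\ast$. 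Hence $\bar x$ is a minimizer of \eqref{optprobphi}, which proves the claim.

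There is no genuine obstacle here, which is precisely why the authors omit the proof: the nonconvexity and nonsmoothness of $\phi$ are irrelevant to existence, only continuity (including at $0$) and coercivity matter, and the latter is exactly the content of Lemma \ref{coer}. The one point worth keeping in mind is that compactness of sublevel sets in $\R^n$ comes for free from boundedness plus closedness (continuity of $J$), so no weak-compactness or lower-semicontinuity machinery is needed; everything reduces to the elementary Weierstrass extreme value theorem applied on a compact sublevel set of $J$.
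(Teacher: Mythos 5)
Your proof is correct and follows exactly the route the paper intends: the authors explicitly omit the proof, stating that it ``follows directly by the continuity and coercivity of the functional,'' and your write-up simply fills in the standard Weierstrass/direct-method details (continuity of both terms, boundedness of a minimizing sequence via Lemma \ref{coer}, Bolzano--Weierstrass, and passage to the limit). Nothing is missing and nothing differs in substance from the paper's argument.
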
 
\begin{oss}\rm{
We remark that when assumption (a) (i) holds but $A$ is not coercive, existence can still be proven in case  
$\Lambda \in \R^{n\times n}$ is invertible. Indeed by the invertibility of $\Lambda$, one can define $\bar{y}=\Lambda^{-1}\bar x$, where $\bar x$ is a minimizer  of $\bar{J}(x)=\frac{1}{2}|(A\Lambda^{-1})x-b|_2^2+\Phi(x)$ and prove that $\bar{y}$ is  a minimizer of \eqref{optprobphi}. The existence of a minimizer for the functional $\bar{J}$ was proven in \cite{J}, Theorem $2.1$.

However in our analysis we cover the two cases (a) and (b) since when (a) (ii) is replaced by the invertibility of $\Lambda$, we can not prove the coercivity of $J$, which is a key element for the convergence of the algorithm that we analise (see the following section). 
}
\end{oss}
\subsection{A monotone convergent algorithm}\label{subsecmon}
Following \cite{KI}, in order to overcome the singularity of the function $\phi(t)$ near $t=0$, we consider for $\eps>0$ the following regularized version of \eqref{optprobphi} 
\begin{equation}
\label{optprobepsphi}
\min_{x \in \R^n}J_\eps(x) = \frac{1}{2}|A x-b|_2^2+ \Psi_\eps(|\Lambda x|^2), 
\end{equation}
where for $t \geq 0$
\begin{equation}\label{psieps}
\Psi_{\eps}(t)= \left\{
\begin{array}{ll}
\frac{\phi'(\eps)}{2\eps}t+\left(1-\frac{\phi'(\eps) \eps}{2\phi(\eps)}\right)\phi(\eps) \quad &\mbox{for }\,\, 0\leq t \leq \eps^2\\
\noalign{\smallskip}
\phi(\sqrt{t}) \quad & \mbox{ for }\,\, t \geq \eps^2,
\end{array}
\right.\,
\end{equation}
and $\Psi_\eps(|\Lambda x|^2)$ is short for $\sum_{i=1}^r \Psi_\eps(|(\Lambda x)_i|^2)$. 
Note that 
\begin{equation}\label{derpsi}
\Psi'_\eps(t)=\frac{1}{\max\left\{\frac{2\eps}{\phi'(\eps)},\frac{2\sqrt{t}}{\phi'(\sqrt{t})}\right\}} \geq 0 \quad \mbox{ on } (0, \infty),
\end{equation}
hence $\Psi_\eps$ is $C^1$ and by assumption \textbf{(H)} (iii) is concave on $[0,\infty)$. Moreover $t \to \Psi_\eps(t^2) \in C^1(-\infty, \infty)$.
\begin{oss}
\rm{
In \eqref{derpsi} we suppose  that the function $x \to \frac{\phi'(x)}{x}$ is decreasing. When the function $x \to \frac{\phi'(x)}{x}$ is increasing, one needs to replace the maximum in \eqref{derpsi} with the minimum and the following results follow as well.
}
\end{oss}
 The necessary optimality condition for \eqref{optprobepsphi} is given by
\begin{equation}\label{necoptcondexp}
A^*A x+\Lambda^* \frac{1}{\max\left\{\frac{\eps}{\phi'(\eps)},\frac{|\Lambda x|}{\phi'(|\Lambda x|)}\right\}}\Lambda x=A^*b,
\end{equation}
 the second addend is short for the vector with $l$-component $\sum_{i=1}^r(\Lambda^*)_{li} \frac{1}{\max\left\{\frac{\eps}{\phi'(\eps)},\frac{|(\Lambda x)_i|}{\phi'(|(\Lambda x)_i|)}\right\}}(\Lambda x)_i$. For convenience of exposition in the following we write \eqref{necoptcondexp} in the more compact notation
$$
A^*A x+2\Lambda^* \Psi'_\eps(|\Lambda x|^2)\Lambda x=A^*b,
$$
where the $l$-component of the second addend is given by $\sum_{i=1}^r(\Lambda^*)_{li} \Psi'_\eps(|(\Lambda x)^2_i|)(\Lambda x)_i$.

This can equivalently be expressed as
\begin{equation}
\label{optcondeps2m}
A^*A x+2\Lambda^* \Psi'_\eps(|y|^2)y=A^*b \quad \mbox{with } y=\Lambda x.
\end{equation} 
In order to solve \eqref{optcondeps2m}, the following iterative procedure is considered:
\begin{equation}\label{iter2m}
A^*A x^{k+1}+2\Lambda^* \Psi'_\eps(|y^k|^2)y^{k+1}=A^*b \quad \mbox{where } y^{k}=\Lambda x^{k}.
\end{equation}
Existence of a solution to equation \eqref{iter2m} for any $k \geq 0$ follows from the existence of a minimizer of the associated optimization problem 
\begin{equation}\label{minit}
\min_{x\in \R^n} \frac{1}{2}|Ax-b|_2^2+\left|\sqrt{\Psi'_\eps(|y^k|^2)} \Lambda x\right|_2^2.
\end{equation}

We have the following convergence result.

\begin{theorem}\label{monotdec2m}
Assume \textbf{(H)} and either (a) or (b). For $\eps>0$, let $\{x_k\}$ be generated by \eqref{iter2m}. Then $J_{\eps}(x_k)$ is strictly monotonically decreasing, unless there exists some $k$ such that $x^k = x^{k+1}$, and $x^k$ satisfies the necessary optimality condition \eqref{optcondeps2m}. Moreover every  cluster point of $x^k$, of which there exists at least one, is a solution of \eqref{optcondeps2m}.
\end{theorem}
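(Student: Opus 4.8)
The proof has three components: (i) establish the strict decrease $J_\eps(x^{k+1}) < J_\eps(x^k)$ unless $x^{k+1}=x^k$; (ii) observe that a fixed point of the iteration solves \eqref{optcondeps2m}; and (iii) extract a convergent subsequence and pass to the limit.  The engine for (i) is the concavity of $\Psi_\eps$ on $[0,\infty)$, which gives the linearization inequality
\begin{equation*}
\Psi_\eps(s) \le \Psi_\eps(t) + \Psi'_\eps(t)(s-t) \qquad \text{for all } s,t \ge 0,
\end{equation*}
with strict inequality when $s \neq t$ (provided $\Psi'_\eps$ is strictly decreasing, which holds on the relevant range).  Apply this coordinatewise with $s = |(\Lambda x^{k+1})_i|^2$ and $t = |(\Lambda x^k)_i|^2$ and sum over $i$.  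Adding $\tfrac12|Ax^{k+1}-b|_2^2$ to both sides, one bounds $J_\eps(x^{k+1})$ above by the value at $x^{k+1}$ of the quadratic surrogate functional
\begin{equation*}
Q_k(x) := \tfrac12|Ax-b|_2^2 + \sum_{i=1}^r \Psi'_\eps(|(\Lambda x^k)_i|^2)\,|(\Lambda x)_i|^2 + c_k,
\end{equation*}
where $c_k$ collects the $x$-independent terms, so that $Q_k(x^k) = J_\eps(x^k)$.

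**Using the minimization step.**  The key point is that $x^{k+1}$, being a minimizer of \eqref{minit} (equivalently of $Q_k$, since \eqref{minit} differs from $Q_k$ only by the constant $c_k$), satisfies $Q_k(x^{k+1}) \le Q_k(x^k) = J_\eps(x^k)$.  Chaining the two inequalities gives $J_\eps(x^{k+1}) \le J_\eps(x^k)$.  For strictness one argues as follows: if equality holds throughout, then on one hand $Q_k(x^{k+1}) = Q_k(x^k)$, and on the other the concavity inequality must be an equality in every coordinate.  By strict concavity of $\Psi_\eps$ (i.e.\ strict monotonicity of $\Psi'_\eps$) this forces $|(\Lambda x^{k+1})_i| = |(\Lambda x^k)_i|$ for all $i$, whence $\Psi'_\eps(|y^{k+1}|^2) = \Psi'_\eps(|y^k|^2)$, so $x^{k+1}$ and $x^k$ both minimize $Q_k$.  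Then $x^k$ also solves \eqref{iter2m} with right-hand side unchanged, i.e.\ $A^*Ax^k + 2\Lambda^*\Psi'_\eps(|y^k|^2)y^k = A^*b$; comparing with \eqref{iter2m} for $x^{k+1}$ shows $A^*A(x^{k+1}-x^k) = 0$ and $\Lambda^*\Psi'_\eps(|y^k|^2)\Lambda(x^{k+1}-x^k) = 0$.  Pairing the latter with $x^{k+1}-x^k$ and using $\Psi'_\eps > 0$ gives $\Lambda(x^{k+1}-x^k)=0$; together with $A(x^{k+1}-x^k)=0$ and assumption (b)(ii) (or directly from coercivity of $A$ under (a)(ii)) one concludes $x^{k+1}=x^k$, and then \eqref{iter2m} reduces to \eqref{optcondeps2m}.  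This simultaneously proves (ii).

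**Compactness and passage to the limit.**  Since $J_\eps(x^k)$ is nonincreasing and $J_\eps \ge 0$ (note $\Psi_\eps \ge 0$ by \textbf{(H)}(i) and the construction \eqref{psieps}), the sequence $J_\eps(x^k)$ is bounded; moreover $J_\eps$ is coercive — this follows from the coercivity of $J$ proved in Lemma \ref{coer} together with the fact that $\Psi_\eps(|\Lambda x|^2)$ and $\Phi(\Lambda x)$ differ only on the bounded region $|(\Lambda x)_i| \le \eps$ where the difference is bounded, so $J_\eps$ and $J$ have the same growth at infinity.  Hence $\{x^k\}$ is bounded and admits at least one cluster point $x^*$.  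Along a subsequence $x^{k_j} \to x^*$; since $\Lambda$ is continuous, $y^{k_j} \to y^* = \Lambda x^*$, and since $t \mapsto \Psi'_\eps(t^2)t$ is continuous (by \eqref{derpsi}, $\Psi_\eps \in C^1$ and indeed $t\mapsto \Psi_\eps(t^2)\in C^1$), the map $x \mapsto A^*Ax + 2\Lambda^*\Psi'_\eps(|\Lambda x|^2)\Lambda x - A^*b$ is continuous.  The difficulty is that \eqref{iter2m} links consecutive iterates, so one also needs $x^{k_j+1} \to x^*$ — this follows because $J_\eps(x^k)-J_\eps(x^{k+1}) \to 0$ (telescoping a convergent monotone sequence), which through the strict-decrease estimate forces $|y^{k+1}-y^k| \to 0$ and then, using $A^*A(x^{k+1}-x^k) + 2\Lambda^*\Psi'_\eps(|y^k|^2)(y^{k+1}-y^k)=0$ and the injectivity of $x \mapsto (Ax,\Lambda x)$, that $x^{k+1}-x^k \to 0$.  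Passing to the limit in \eqref{iter2m} along the subsequence then yields \eqref{optcondeps2m} for $x^*$.

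**Main obstacle.**  The delicate point is the passage to the limit: quantifying the strict decrease so that $J_\eps(x^k)-J_\eps(x^{k+1})\to 0$ yields $\|x^{k+1}-x^k\|\to 0$ (not merely $\|y^{k+1}-y^k\|\to 0$), which requires combining the coordinatewise concavity defect with the linear-algebra injectivity hypothesis (b)(ii), and being careful that $\Psi'_\eps$ is bounded below by a positive constant on the range of the (bounded) iterates so that the quadratic surrogate is uniformly strongly convex on that range.
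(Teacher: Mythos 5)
Your proof is correct and, at bottom, takes the same route as the paper: the descent inequality you obtain from the majorize--minimize viewpoint ($J_\eps\le Q_k$ with equality at $x^k$, $x^{k+1}$ minimizing $Q_k$, and the exact quadratic expansion $Q_k(x^k)-Q_k(x^{k+1})=\tfrac12|A(x^{k+1}-x^k)|_2^2+\sum_i\Psi'_\eps(|y_i^k|^2)|y_i^{k+1}-y_i^k|^2$) is algebraically identical to the paper's inequality \eqref{442m}, which is derived instead by testing \eqref{iter2m} against $x^{k+1}-x^k$ and combining a polarization identity with the concavity linearization. The remaining steps (coercivity of $J_\eps$, boundedness and telescoping, the kernel condition (a)(ii)/(b)(ii) to upgrade $|A(x^{k+1}-x^k)|\to 0$ and $|y^{k+1}-y^k|\to 0$ to $x^{k+1}-x^k\to 0$, and passage to the limit in \eqref{iter2m}) coincide with the paper's; your explicit justification that $J_\eps$ and $J$ differ by a bounded amount is in fact more careful than the paper's unexplained appeal to coercivity of $J_\eps$.

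One blemish worth fixing: $\Psi_\eps$ is \emph{not} strictly concave --- it is affine on $[0,\eps^2]$ by construction \eqref{psieps}, and for SCAD and MCP it is constant for large arguments --- so your claim that equality in the linearization forces $|(\Lambda x^{k+1})_i|=|(\Lambda x^k)_i|$ is unjustified. Fortunately that step is redundant: equality $J_\eps(x^{k+1})=J_\eps(x^k)$ already forces $Q_k(x^{k+1})=Q_k(x^k)$, so $x^k$ is also a minimizer of the convex quadratic $Q_k$, and your subsequent argument (normal equations, pairing with $x^{k+1}-x^k$, then coercivity of $A$ under (a) or $\Psi'_\eps>0$ together with (b)(ii) under (b)) concludes $x^{k+1}=x^k$ without any strict concavity. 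A similar caution applies to the uniform lower bound $\Psi'_\eps\ge\kappa>0$ on the bounded range of iterates: it holds under (b) but fails for SCAD/MCP, where $\Psi'_\eps$ vanishes for large arguments; under (a) it is not needed since coercivity of $A$ alone gives $x^{k+1}-x^k\to0$ (the paper is equally imprecise on this point).
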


\begin{proof}

The proof strongly depends on the coercivity of the functional $J$ and it follows  arguments similar to those of \cite[Theorem $4.1$]{KI}. 

Multiplying \eqref{iter2m} by $x^{k+1}-x^k$, we get
\begin{eqnarray}\label{proofeq1}
\frac{1}{2}|A x^{k+1}|^2-\frac{1}{2}|A x^k|^2+\frac{1}{2}|A (x^{k+1}-x^{k})|^2&\nonumber +& \left( 2\Psi'_\eps(|y^k|^2)y^{k+1},y^{k+1}-y^{k}\right)\\  &=&(A^*b,x^{k+1}-x^k).
\end{eqnarray}
Note that 
\begin{equation}\label{mon1}
\left(y^{k+1},y^{k+1}-y^k\right)=\frac{1}{2}\sum_{i=1}^n\left(|y_i^{k+1}|^2-|y_i^{k}|^2+|y_i^{k+1}-y_i^{k}|^2\right).
\end{equation}
By assumption ${\bf (H)}\,(iii)$ the function $t \rightarrow \Psi_\eps(t)$ is concave on $[0,\infty)$, and we have 
\begin{equation}\label{mon3}
2\Psi_\eps(|y_i^{k+1}|^2)-2\Psi_\eps(|y_i^{k}|^2)-\Psi'_\eps(|y_i^k|^2)(|y_i^{k+1}|^2-|y_i^{k}|^2) \leq 0.
\end{equation}
Then, using \eqref{proofeq1}-\eqref{mon3}, we get
\begin{equation}\label{442m}
J_\eps(x^{k+1}) +\frac{1}{2}|A(x^{k+1}-x^k)|_2^2+\sum_i\Psi'_\eps(|y_i^k|^2)|y_i^{k+1}-y_i^{k}|^2 \leq J_\eps(x^k).
\end{equation}
From \eqref{442m} and the coercivity of $J_\eps$,  it follows that $\{x^k\}_{k=1}^\infty$ and thus $\{y^{k}\}_{k=1}^\infty$ are bounded. Consequently, from \eqref{442m} and \eqref{derpsi}, there exists a constant $\kappa>0$ such that 
\begin{equation}\label{45}
J_\eps(x^{k+1}) +\frac{1}{2}|A(x^{k+1}-x^k)|_2^2+\kappa|y^{k+1}-y^{k}|_2^2  \leq J_\eps(x^k).
\end{equation}
Conditions (a) (ii), (b) (ii) respectively imply that $J_\eps(x_k)$ is strictly decreasing unless $x^k=x^{k+1}$. In the latter case from \eqref{iter2m} we infer that $x^k$ solves \eqref{optcondeps2m}, from which we conclude the first  part of the theorem. 

From \eqref{45}, we conclude that 
\begin{equation}\label{46}
\sum_{k=0}^\infty |A(x^{k+1}-x^k)|_2^2+\kappa|y^{k+1}-y^{k}|_2^2  <\infty.
\end{equation}
Since $\{x^k\}_{k=1}^\infty$ is bounded, there exists a subsequence and $\bar x \in \R^n$ such that $x^{k_l}\rightarrow \bar x$. By \eqref{46} we get
$$
\lim_{k\to \infty}|A(x^{k+1}-x^k)|_2^2+\kappa|y^{k+1}-y^{k}|_2^2=0.
$$
Then by using the coercivity of A under assumption (a) and the fact that $\mbox{Ker}(A)\cap \mbox{Ker}(\Lambda)=\{0\}$ under assumption (b),  we conclude  that 
$$
\lim_{k\to \infty}(x^{k+1}-x^{k})=0
$$
and hence $x^{k_l+1} \rightarrow \bar x$.
We can now pass  to the limit with respect to $k$ in  \eqref{iter2m}, to obtain that $\bar x$ is a solution to \eqref{optcondeps2m}.

\end{proof}

In the following proposition we establish the convergence of \eqref{optprobepsphi} to \eqref{optprobphi} as $\eps$  goes to zero.
\begin{proposition}
Assume \textbf{(H)} and either (a) or (b). Denote by $\{x_\eps\}_{\eps>0}$ a solution to \eqref{optprobepsphi}. Then any cluster point of  $\{x_\eps\}_{\eps>0}$, of which there exists at least one,  is a solution of \eqref{optprobphi}.
\end{proposition}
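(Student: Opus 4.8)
The plan is to argue by a standard $\Gamma$-convergence-flavoured direct method, combining the uniform coercivity of the family $J_\eps$ with the pointwise convergence $J_\eps \to J$ from below. First I would verify that $\{x_\eps\}_{\eps>0}$ is bounded: by optimality of $x_\eps$ we have $J_\eps(x_\eps) \le J_\eps(0) = J(0)$ (using $\Psi_\eps(0)=\phi(0)\cdot(\text{const})$; more precisely one checks from \eqref{psieps} that $\Psi_\eps(0)$ stays bounded as $\eps\to 0$, say $\Psi_\eps(0)\le C$), and since $\Psi_\eps(t)\ge \phi(\sqrt t)$ for all $t\ge 0$ (the affine piece lies above the concave graph of $t\mapsto\phi(\sqrt t)$ by concavity and the tangency/secant construction), we get $J(x_\eps)\le J_\eps(x_\eps)\le C'$. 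Coercivity of $J$ (Lemma \ref{coer}) then forces $\{x_\eps\}$ bounded, so a cluster point $\bar x$ exists: $x_{\eps_j}\to\bar x$ for some sequence $\eps_j\downarrow 0$.

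Next I would establish the two inequalities that pin down optimality of $\bar x$. For the liminf inequality, note $\Psi_{\eps}(t)\ge \phi(\sqrt t)$ pointwise and, for fixed $t>0$, $\Psi_\eps(t)=\phi(\sqrt t)$ as soon as $\eps^2\le t$; hence for each fixed index $i$ with $(\Lambda\bar x)_i\neq 0$ we have $\Psi_{\eps_j}(|(\Lambda x_{\eps_j})_i|^2)\to \phi(|(\Lambda\bar x)_i|)$ by continuity, while for indices with $(\Lambda\bar x)_i=0$ we use $\Psi_{\eps_j}(\cdot)\ge 0$ together with $\phi(0)=0$ to get $\liminf \Psi_{\eps_j}(|(\Lambda x_{\eps_j})_i|^2)\ge 0=\phi(0)$. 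Combined with continuity of the quadratic term this yields $\liminf_j J_{\eps_j}(x_{\eps_j})\ge J(\bar x)$. For the limsup/comparison inequality, fix an arbitrary competitor $z\in\R^n$; by optimality $J_{\eps_j}(x_{\eps_j})\le J_{\eps_j}(z)$, and I claim $J_{\eps_j}(z)\to J(z)$: indeed for each $i$, if $(\Lambda z)_i\neq 0$ then eventually $\eps_j^2<|(\Lambda z)_i|^2$ and $\Psi_{\eps_j}(|(\Lambda z)_i|^2)=\phi(|(\Lambda z)_i|)$ exactly, while if $(\Lambda z)_i=0$ then $\Psi_{\eps_j}(0)=\big(1-\frac{\phi'(\eps_j)\eps_j}{2\phi(\eps_j)}\big)\phi(\eps_j)\to 0=\phi(0)$ — here one needs $\phi(\eps_j)\to 0$ (from continuity and $\phi(0)=0$) and that the bracket stays bounded, which follows from concavity of $\phi$ on $\R^+$ since $0\le \phi'(\eps)\eps\le \phi(\eps)$, hence the bracket lies in $[\tfrac12,1]$. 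Passing to the limit gives $J(\bar x)\le \liminf_j J_{\eps_j}(x_{\eps_j})\le \limsup_j J_{\eps_j}(z) = J(z)$ for every $z$, so $\bar x$ minimizes $J$, i.e. solves \eqref{optprobphi}.

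The main obstacle, and the only place demanding genuine care, is the behaviour of $\Psi_\eps(0)$ and the affine piece of $\Psi_\eps$ near the origin as $\eps\to 0$: one must confirm (i) $\Psi_\eps(0)\to 0$ so that the regularized penalty does not leave a spurious residual at coordinates that vanish in the limit, and (ii) $\Psi_\eps(t)\ge \phi(\sqrt t)$ everywhere so that $J\le J_\eps$, which is what drives both the boundedness argument and the liminf inequality. Both reduce to elementary consequences of the concavity hypothesis \textbf{(H)}(iii) on $\phi|_{\R^+}$ (the tangent line to a concave function lies above it, and for a nonnegative concave function vanishing at $0$ one has $t\phi'(t)\le\phi(t)$); I would isolate these as a short preliminary observation. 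Everything else — boundedness via Lemma \ref{coer}, extraction of a convergent subsequence, and the two-sided inequality — is routine. A final remark: if one only assumes the weaker monotonicity in \textbf{(H)}(iv) with $t\mapsto\phi'(t)/t$ increasing, the roles of $\max$ and $\min$ in \eqref{derpsi} swap but the inequality $\Psi_\eps\ge\phi(\sqrt\cdot)$ and $\Psi_\eps(0)\to 0$ are unaffected, so the proof goes through verbatim.
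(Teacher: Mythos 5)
Your argument is correct, and its overall skeleton coincides with the paper's: extract a convergent subsequence from boundedness and pass to the limit in the optimality inequality $J_{\eps}(x_\eps)\le J_\eps(z)$. The technical ingredients differ in two places, and in both your version is somewhat more explicit. First, for boundedness the paper simply invokes coercivity of $J_\eps$, which as stated needs a uniformity-in-$\eps$ argument; you instead observe that the affine piece in \eqref{psieps} is exactly the tangent line at $t=\eps^2$ to the concave map $t\mapsto\phi(\sqrt t)$, hence $\Psi_\eps\ge\phi(\sqrt{\cdot})$ and $J\le J_\eps$, so the uniform bound $J_\eps(x_\eps)\le J_\eps(0)\le C$ transfers to $J(x_\eps)$ and Lemma \ref{coer} applies; this closes the small gap cleanly. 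Second, for the passage to the limit the paper proves uniform convergence $\sup_{t\ge0}|\Psi_\eps(t)-\phi(\sqrt t)|\to0$ (a consequence of $\phi(\eps)\to0$ and $\phi'(\eps)\eps\le\phi(\eps)$, the same concavity facts you isolate) and then lets $\eps\to0$ in the comparison with an arbitrary competitor, whereas you split the argument $\Gamma$-convergence style into a liminf inequality (via $\Psi_\eps\ge\phi(\sqrt{\cdot})$ and a coordinate-wise case distinction at zero/nonzero components of $\Lambda\bar x$) and a pointwise convergence $J_{\eps_j}(z)\to J(z)$ using $\Psi_{\eps_j}(0)\to0$. The uniform-convergence route is shorter once the estimate is in hand; your one-sided bound plus liminf/limsup split is marginally more work but makes the role of concavity transparent and handles the boundedness step more rigorously. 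Your closing remark on the max/min convention is also accurate: only the compact formula \eqref{derpsi} changes, not \eqref{psieps}, so the tangent-line inequality and $\Psi_\eps(0)\to0$ are unaffected. No gaps.
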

\begin{proof}
From the coercivity of $J_\eps$ we have that $\{x_\eps\}_{\eps}$ is bounded for $\eps$ small. Hence there exists a subsequence  and $\bar x \in \R^n$ such that $x_{\eps_l} \rightarrow \bar x$. 

By property \textbf{(H)} (i) of $\phi$, we have
\begin{equation}\label{Hi}
\lim_{t\to 0 }\phi(t)=0 \quad \mbox{ and } \quad \phi'(t)\geq 0 \quad \forall t \geq 0.
\end{equation}
Moreover by the concavity of the function $\phi$ we have
$$
\phi(t)-\phi(s)\leq \phi'(s)(t-s) \quad \mbox{for } s \in(0,\infty), t \in [0,\infty)
$$
and by choosing $s=\eps$ and $t=0$ and by \eqref{Hi}, we get for $\eps$ small enough
\begin{equation}\label{psiepseps}
\phi'(\eps)\eps \to 0 \,\,\mbox{ as } \eps \to 0.
\end{equation}
By the definition of $\Psi_\eps$, \eqref{Hi} and \eqref{psiepseps} we obtain that $\Psi_\eps(t)$ converges uniformly to $\phi(\sqrt{t})$ as $\eps \to 0$, equivalently
$$
 \sup_{t\in [0,\infty)}\left|\Psi_\eps(t) -\phi(\sqrt{t})\right| \to 0\quad \mbox{ as } \eps \to 0,
$$
from which we obtain
\begin{equation}\label{limpsieps}
\Psi_\eps(|\Lambda x_\eps|^2)=\sum_{i=1}^r\Psi_\eps(|(\Lambda x_\eps)_i|^2)  \to \sum_{i=1}^r \phi(\Lambda x_\eps)_i=\Phi(\Lambda\bar x)\quad \mbox{ as } \eps \to 0.
\end{equation}
 Since $\{x_\eps\}_{\eps}$ solves \eqref{optprobepsphi}, by letting  $\eps\to 0$ and using \eqref{limpsieps}, we easily get that  $\bar x$ is a solution of \eqref{optprobphi}.
\end{proof}

\subsection{Asymptotic behaviour  as $\lambda \to 0^{+}$ and $\tau\to 0^{+}$ for the power law}\label{subsecas}
We  discuss the asymptotics as $\lambda$ and $\tau$ go to zero  in \eqref{optprobphi} for $\phi(t)=|t|^\tau, \tau \in (0,1]$, which we repeat for convenience
\begin{equation}\label{optprobphias}
\min_{x \in \R^n}\frac{1}{2}|A x-b|_2^2+\lambda |\Lambda x|_\tau^\tau,
\end{equation}
where $A, b, \Lambda$ are as in \eqref{optprobphi}, $\tau \in (0,1], \lambda >0$ and 
$$
|\Lambda x|_\tau^\tau=\sum_{i=1}^r |(\Lambda x)_i|^\tau.
$$

First we analyse the convergence as $\lambda \to 0$ for any fixed $\tau >0$.
We denote by $P$  the orthogonal projection of $\R^n$ onto $\mbox{Ker}(A^{*})$ and set $\tilde{b} = (I-P)b \in \mbox{Rg}(A)$. Then
$$
|Ax -b|_2^2=|Ax-\tilde{b}|_2^2+|Pb|_2^2.
$$
For $\tau>0$ fixed consider the problem
\begin{equation}\label{minnorm}
\min |\Lambda x|_\tau^\tau \quad  \mbox{ subject to } Ax -\tilde{b}=0,
\end{equation}

\begin{theorem}
Let $\mbox{rank}(A)=n$. For $\tau>0$ fixed, let $x_{\lambda}$ be a minimizer of \eqref{optprobphias} for any given $\lambda >0$. Let $\tilde{x} \in \R^n$ be such that $A\tilde{x}=\tilde{b}$. Then every  cluster point  of solutions $x_{\lambda}$ to \eqref{optprobphias} as $\lambda \to 0^{+}$ is a solution to \eqref{minnorm}.
\end{theorem}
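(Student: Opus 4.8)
The plan is to run a standard penalization argument in three steps: (i) uniform boundedness of $\{x_\lambda\}$ as $\lambda\to 0^+$, so that cluster points exist; (ii) admissibility of any cluster point for \eqref{minnorm}; and (iii) optimality. Throughout I would exploit the decomposition $|Ax-b|_2^2=|Ax-\tilde b|_2^2+|Pb|_2^2$ already recorded above, together with $A\tilde x=\tilde b$.

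First I would establish boundedness. Since $\mbox{rank}(A)=n$, the map $x\mapsto|Ax-b|_2^2$ is coercive. Testing optimality of $x_\lambda$ against the fixed point $\tilde x$ gives
\[
\tfrac12|Ax_\lambda-b|_2^2\le \tfrac12|Ax_\lambda-b|_2^2+\lambda|\Lambda x_\lambda|_\tau^\tau\le \tfrac12|A\tilde x-b|_2^2+\lambda|\Lambda\tilde x|_\tau^\tau=\tfrac12|Pb|_2^2+\lambda|\Lambda\tilde x|_\tau^\tau .
\]
For $\lambda\le 1$ the right-hand side is bounded, hence so is $|Ax_\lambda-b|_2^2$, and coercivity of $x\mapsto|Ax-b|_2^2$ forces $\{x_\lambda\}$ to be bounded. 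Thus at least one cluster point exists; fix a sequence $\lambda_k\to 0^+$ with $x_{\lambda_k}\to\bar x$.

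Second, passing to the limit $k\to\infty$ in the displayed inequality (with $\lambda_k\to0$) yields $\tfrac12|A\bar x-b|_2^2\le\tfrac12|Pb|_2^2$. On the other hand $|A\bar x-b|_2^2=|A\bar x-\tilde b|_2^2+|Pb|_2^2\ge|Pb|_2^2$, so $|A\bar x-\tilde b|_2^2=0$, i.e. $A\bar x=\tilde b$, and $\bar x$ is feasible for \eqref{minnorm} (which is nonempty, $\tilde x$ being feasible). Third, for optimality let $x$ be any point with $Ax=\tilde b$. Then optimality of $x_{\lambda_k}$ together with the residual decomposition gives
\[
\tfrac12|Pb|_2^2+\lambda_k|\Lambda x_{\lambda_k}|_\tau^\tau\le\tfrac12|Ax_{\lambda_k}-b|_2^2+\lambda_k|\Lambda x_{\lambda_k}|_\tau^\tau\le\tfrac12|Ax-b|_2^2+\lambda_k|\Lambda x|_\tau^\tau=\tfrac12|Pb|_2^2+\lambda_k|\Lambda x|_\tau^\tau ,
\]
whence, dividing by $\lambda_k>0$, $|\Lambda x_{\lambda_k}|_\tau^\tau\le|\Lambda x|_\tau^\tau$. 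Since $t\mapsto|t|^\tau$ is continuous for $\tau\in(0,1]$, the map $x\mapsto|\Lambda x|_\tau^\tau$ is continuous, and letting $k\to\infty$ gives $|\Lambda\bar x|_\tau^\tau\le|\Lambda x|_\tau^\tau$. As $x$ was an arbitrary feasible point, $\bar x$ solves \eqref{minnorm}.

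The argument is essentially routine; the only points requiring care are the restriction to a bounded range of $\lambda$ when invoking coercivity of $x\mapsto|Ax-b|_2^2$, the systematic use of the identity $|Ax-b|_2^2=|Ax-\tilde b|_2^2+|Pb|_2^2$ to separate the attainable from the unattainable part of the residual, and the (elementary) continuity of $x\mapsto|\Lambda x|_\tau^\tau$ for $\tau\le1$, which is what allows the limit in the last step without any lower-semicontinuity subtlety. I would not expect a genuine obstacle here, since consistency of \eqref{minnorm} is guaranteed by $\tilde b\in\mbox{Rg}(A)$ and the hypothesis that $\tilde x$ with $A\tilde x=\tilde b$ exists.
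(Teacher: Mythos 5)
Your proof is correct and follows essentially the same route as the paper's: test optimality of $x_\lambda$ against the feasible point $\tilde x$, use the decomposition $|Ax-b|_2^2=|Ax-\tilde b|_2^2+|Pb|_2^2$ and $\mbox{rank}(A)=n$ to get boundedness and feasibility of the cluster point, then divide by $\lambda$ and pass to the limit by continuity of $x\mapsto|\Lambda x|_\tau^\tau$. Your write-up is in fact slightly more explicit than the paper's (e.g.\ in restricting to $\lambda\le 1$ for the boundedness step and in quantifying over all feasible comparison points $x$), but there is no substantive difference.
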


\begin{proof}
By optimality of $x_\lambda$ we have
\begin{equation}\label{G}
\frac{1}{2}|Ax_{\lambda} - \tilde{b}|_2^2+\lambda |\Lambda x_\lambda|_\tau^\tau \leq \frac{1}{2}|A\tilde{x}-\tilde{b}|_2^2+\lambda |\Lambda \tilde{x}|_\tau^\tau=\lambda |\Lambda \tilde{x}|_\tau^\tau,
\end{equation}
from which we conclude that $\lim |Ax_{\lambda} -\tilde{b}|_2^2=0$ as $\lambda \to 0^{+}$.

Since $\mbox{rank}(A)=n$, the sequence $\{x_\lambda\}_{\lambda >0}$ is bounded in $\lambda$. Then there exists $\bar x \in \R^n$ and a subsequence $x_{\lambda_l} \to \bar x$ satisfying $A \bar x =\tilde{b}$. From \eqref{G} we have
$$
 |\Lambda x_\lambda|_\tau^\tau\leq |\Lambda \tilde{x}|_\tau^\tau \quad \mbox{ for all } \tilde x \mbox{ satisfying } A\tilde{x}=\tilde{b}. 
$$
Taking the limit as $\lambda \to 0^{+}$, we conclude that $\bar x$ is a solution to \eqref{minnorm}.

\end{proof}

Now we prove the convergence as $\tau \to 0$ for any fixed $\lambda >0$ of  \eqref{optprobphias}   to the related   $\ell^0$-problem
\begin{equation}\label{optprobphi0}
\min_{x \in \R^n}\frac{1}{2}|Ax-b|_2^2+\lambda |\Lambda x|_0,
\end{equation}
where for any $x \in \R^n$
$$
|x|_0=\sum_{k=1}^n|x_k|^0=\mbox{ number of nonzero elements of } x.
$$
The precise statement is given in the following theorem.  
\begin{theorem}
Let $\mbox{rank}(A)=n$ and let $\lambda >0$ be fixed. Then any cluster point (of which there exists at least one) of solutions $\{x_{\tau}\}$ to \eqref{optprobphias} converges as $\tau \to 0^{+}$ to a solution of \eqref{optprobphi0}.
\end{theorem}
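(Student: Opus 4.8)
The plan is to run a standard $\Gamma$-limit style sandwich argument: a uniform coercivity estimate to guarantee that cluster points exist, then a $\limsup$ inequality coming from optimality of $x_\tau$ together with the pointwise limit $|t|^\tau\to|t|^0$, and finally a $\liminf$ inequality coming from the sequential lower semicontinuity of the $\ell^0$ term along $x_{\tau_l}\to\bar x$.

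\emph{Step 1 (cluster points exist).} For $\tau\in(0,1]$ write $J_\tau(x)=\tfrac12|Ax-b|_2^2+\lambda|\Lambda x|_\tau^\tau$ and $J_0(x)=\tfrac12|Ax-b|_2^2+\lambda|\Lambda x|_0$; existence of a minimizer $x_\tau$ of $J_\tau$ for each $\tau$ follows from Theorem \ref{thmex}, case (b), which applies because $\mbox{rank}(A)=n$ forces $\mbox{Ker}(A)\cap\mbox{Ker}(\Lambda)=\{0\}$. From $J_\tau(x_\tau)\le J_\tau(0)=\tfrac12|b|_2^2$ we get $|Ax_\tau-b|_2^2\le|b|_2^2$, hence $|Ax_\tau|_2\le 2|b|_2$, and since $\mbox{rank}(A)=n$ gives $|Ax|_2\ge c|x|_2$ for some $c>0$, the family $\{x_\tau\}$ is bounded uniformly in $\tau$. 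Therefore it has at least one cluster point as $\tau\to0^{+}$; fix one, $\bar x$, together with a sequence $\tau_l\to0^{+}$ such that $x_{\tau_l}\to\bar x$.

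\emph{Step 2 (the two inequalities).} For any fixed $x\in\R^n$, componentwise $|(\Lambda x)_i|^{\tau_l}\to 1$ when $(\Lambda x)_i\neq0$ and $|(\Lambda x)_i|^{\tau_l}=0$ otherwise, so $J_{\tau_l}(x)\to J_0(x)$; combined with the optimality bound $J_{\tau_l}(x_{\tau_l})\le J_{\tau_l}(x)$ this yields $\limsup_l J_{\tau_l}(x_{\tau_l})\le J_0(x)$ for every $x$. For the reverse bound, $\tfrac12|Ax_{\tau_l}-b|_2^2\to\tfrac12|A\bar x-b|_2^2$ by continuity, while discarding the indices with $(\Lambda\bar x)_i=0$ (which contribute only nonnegative terms) and using that for the remaining indices $(\Lambda x_{\tau_l})_i\to(\Lambda\bar x)_i\neq0$, so $\tau_l\ln|(\Lambda x_{\tau_l})_i|\to0$ and $|(\Lambda x_{\tau_l})_i|^{\tau_l}\to1$, gives $\liminf_l|\Lambda x_{\tau_l}|_{\tau_l}^{\tau_l}\ge|\Lambda\bar x|_0$, hence $\liminf_l J_{\tau_l}(x_{\tau_l})\ge J_0(\bar x)$. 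Chaining the two estimates, $J_0(\bar x)\le\liminf_l J_{\tau_l}(x_{\tau_l})\le\limsup_l J_{\tau_l}(x_{\tau_l})\le J_0(x)$ for all $x\in\R^n$, i.e. $\bar x$ solves \eqref{optprobphi0}; in particular the infimum of $J_0$ is attained at $\bar x$, so no separate existence statement for the $\ell^0$-problem is required.

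\emph{Main obstacle.} There is no deep difficulty here; the single point requiring care is the limit of $|(\Lambda x_{\tau_l})_i|^{\tau_l}$ for indices with $(\Lambda\bar x)_i\neq0$, where one must observe that along the tail of the sequence the base stays bounded away from $0$ and from $\infty$, so its logarithm remains bounded while $\tau_l\to0$ and the power tends to $1$. For the indices with $(\Lambda\bar x)_i=0$ no such control is available, nor is it needed, since only a lower estimate enters there and nonnegativity of those terms suffices. Everything else is a routine continuity and coercivity argument, entirely parallel to the proof of the preceding theorem on the $\lambda\to0^{+}$ asymptotics.
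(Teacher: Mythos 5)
Your proof is correct, and while the overall strategy (boundedness from $\mathrm{rank}(A)=n$, the optimality inequality $J_{\tau}(x_\tau)\le J_\tau(x)$, and the limit of the power terms) is the same as the paper's, your handling of the degenerate indices is genuinely different and in fact more robust. The paper proves the two-sided convergence $|\Lambda x_\tau|_\tau^\tau\to|\Lambda\bar x|_0$ by arguing, for indices with $\bar y=|(\Lambda\bar x)_i|=0$, that $y_\tau<1$ eventually and hence $y_\tau^\tau<y_\tau\to0$. That inequality is backwards: for $y\in(0,1)$ and $\tau\in(0,1)$ one has $y^\tau>y$, and the limit of $y_\tau^{\tau}$ when both $y_\tau\to0$ and $\tau\to0$ is genuinely indeterminate (take $y_\tau=e^{-1/\tau}$ to get $y_\tau^\tau\equiv e^{-1}$). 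Your sandwich argument sidesteps this entirely: you only ever need a lower bound on those terms (nonnegativity) for the $\liminf$ inequality, and the $\limsup$ inequality is obtained by evaluating $J_{\tau_l}$ at a \emph{fixed} competitor $x$, where the pointwise limit $|(\Lambda x)_i|^{\tau_l}\to|(\Lambda x)_i|^0$ is unproblematic. So your version buys a proof that does not rely on the (unjustified) full convergence of $|\Lambda x_\tau|_\tau^\tau$, at the mild cost of splitting the argument into the two $\Gamma$-convergence-style inequalities; the paper's version is shorter on the page but, as written, contains a gap at exactly the point you flagged as the "single point requiring care." Your Step 1 also spells out the uniform bound $|Ax_\tau|_2\le2|b|_2$ that the paper leaves implicit.
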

\begin{proof}
$\mbox{Rank}(A)=n$ ensures the existence of a converging subsequence (denoted with the same symbol) $\{x_\tau\}\to \bar x$ for some $\bar x\in \R^n$. For any fixed $i \in \{1,\cdots, r\}$, denote $y_\tau=|(\Lambda x_\tau)_i|$ and $\bar y=|(\Lambda \bar x)_i|$ and notice that  $y_\tau \to \bar{y}$ as $\tau \to 0$. Then if $\bar y=0$, we can assume $y_\tau <1$ for $\tau$ enough small and we conclude
$$
y_\tau^\tau<y_\tau\to 0 \quad \mbox{ as } \tau \to 0.
$$
If $\bar y>0$ we have
$$
\log(y_\tau^\tau)=\tau\log(y_\tau) \to 0 \quad \mbox{ as } \tau \to 0 
$$
and thus
$$
y_\tau^\tau \to 1 \quad \mbox{ as } \tau \to 0.
$$
 By using the above arguments  for all $i=1,\cdots, r$, we have
$$
|(\Lambda x_\tau)_i|^\tau \to |(\Lambda \bar x)_i|^0 \quad \mbox{ as } \tau \to 0
$$
and then we conclude
\begin{equation}\label{limtau}
|\Lambda x_\tau|^{\tau}_\tau\to |\Lambda \bar x|_0 \quad \mbox{ as } \tau \to 0.
\end{equation}
By the optimality of $x_\tau$ we have
$$
\frac{1}{2}|Ax_\tau-b|_2^2+\lambda |\Lambda x_\tau|^{\tau}_\tau\leq \frac{1}{2}|Ax-b|_2^2+\lambda |\Lambda x|_\tau^\tau, \quad \mbox{ for all } x \in \R^n.
$$
 Then the proof follows by taking the limit $\tau\to 0$ and using \eqref{limtau} to obtain
$$
\frac{1}{2}|A\bar x-b|_2^2+\lambda |\Lambda \bar x|_0\leq \frac{1}{2}|Ax-b|_2^2+\lambda |\Lambda x|_0, \quad \mbox{ for all } x \in \R^n.
$$
\end{proof}

\section{Algorithm and numerical results}\label{alg}
For convenience we recall the algorithm in the following form.  

\begin{algorithm}[h!]
	\caption{Monotone algorithm with $\eps$-continuation strategy}
	\begin{algorithmic}[1]
		\STATE Initialize $x^0$, $\eps^0$,  and set $y^{0}=\Lambda x^0$. Set $k=0$;
		\REPEAT
		\STATE  Solve for $x^{k+1}$ 
		$$
		A^*A x^{k+1}+\Lambda^*\frac{1}{\max\left\{\frac{\eps}{\phi'(\eps)},\frac{|y^k|}{\phi'(|y^k|)}\right\}}\Lambda x^{k+1}=A^*b.
			$$
		\STATE Set $y^{k+1}=\Lambda x^{k+1}$.
		\STATE Set $k=k+1$.
		\UNTIL{the stopping criterion is fulfilled}.
		\STATE  Reduce $\eps$ and repeat 2.
	\end{algorithmic}
\end{algorithm}

%

\begin{remark}\rm{
Note that an $\eps$-continuation strategy is performed, that is, the procedure is performed for an initial value $\eps^0$ and then $\eps$ is decreased up to a certain value. More specifically, in all our experiments, $\eps$  is  initialized with $10^{-1}$ and decreased up to $10^{-12}$. \\
}
\end{remark}

\begin{remark}\rm{
The stopping criterion is based on the $l^\infty$-norm of the equation \eqref{optcondeps2m} and the tolerance is set to $10^{-3}$ in all the following examples, expect for the fracture problem where it is of the order of  $10^{-15}$.
}
\end{remark}

In the following  subsection we present our numerical results in cohesive fracture mechanics.   Then in subsection  \ref{compg} the performance of our algorithm is compared to two other schemes for nonconvex and nonsmooth optimization problems.

\subsection{Application to quasi-static evolution of cohesive fracture models}\label{subsecnum}

In this section we focus on the numerical realization of  quasi-static evolutions of cohesive fractures. This kind of problems require the minimization of an energy functional, which has two components: the elastic energy and the cohesive fracture energy. The underlying idea is that the fracture energy is released gradually with the growth of the crack opening.  The cohesive energy, denoted by $\theta$, is assumed to be a monotonic non-decreasing  function of the jump amplitude of the displacement,  denoted by $\llbracket u \rrbracket$. Cohesive energies were introduced independently by Dugdale  \cite{DG} and Barenblatt  \cite{BA}, we refer to \cite{PI13} for more details on the models.  Let us just remark that the two models differ mainly in the evolution of  the derivative $\theta'(\llbracket u\rrbracket)$, that is, the \textit{bridging force}, across a crack amplitude $\llbracket u \rrbracket$. In Dugdale's model this force keeps a constant value up to a critical value of the crack opening and then drops to zero. In Barenblatt's model, the dependence of the force on $\llbracket u \rrbracket$ is continuous and decreasing. \\
In this section we test the $\ell^\tau$-term $0<\tau<1$ as a  model for the cohesive energy. In particular,  the cohesive energy is not differentiable in zero and the bridging force goes to infinity when the jump amplitude goes to zero. Note also that the bridging force goes to zero when the jump amplitude goes to infinity.\\
we denote by $u\,:\, \Omega \rightarrow \R$ the displacement function. The deformation of the domain is given by an external force which we express in terms of an external displacement function $g\,:\,\Omega\times [0,T] \rightarrow \R$. We require that the displacement $u$ coincides with the external deformation, that is
$$
u|_{\partial \Omega}=g|_{\partial \Omega}.
$$
We denote by $\Gamma$ the point of the (potential) crack, and  by $\theta(\llbracket u \rrbracket)_\Gamma$ the value of the cohesive energy  $\theta$ on the crack amplitude of the displacement $\llbracket u \rrbracket$ on $\Gamma$. Since we are in a quasi-static setting, we introduce the time discretization $0=t_0<t_1< \cdots <t_T=T$ and look for the equilibrium configurations which are minimizers of the energy of the system. This means that  for each $i \in \{0, \cdots, T\}$ we need to minimize the energy of the system
$$
J(u)=\frac{1}{2}\int_{\Omega\backslash \Gamma}|a(x)\nabla u|^2 dx +\theta(\llbracket u \rrbracket)_\Gamma
$$ 
with respect to a given boundary datum $g$:
$$
u^*\in \argmin_{u=g(t_i) \mbox{ on } \partial \Omega} J(u).
$$
The function $a(\cdot)$ measures the degree of homogeneity of the material, e.g.  $a(x)\equiv 1$ means that the material is homogeneous.

In our experiments, we consider three different types of cohesive energy, the $\ell^\tau$ $\tau \in (0,1)$,  SCAD and  MCP penalties as defined in \eqref{p}, \eqref{SCAD}, \eqref{MCP} respectively.

In  paragraphs \ref{onedim} and \ref{twodim} we show our results for one-dimensional and two-dimensional experiments, respectively.
\subsubsection{One-dimensional experiments}\label{onedim}
We consider the one-dimensional domain $\Omega=[0,1]$  and   we chose the point of crack  as the midpoint $\Gamma=0.5$. 
 We divide $\Omega$ into $2N$ intervals  and  approximate the displacement function   with a function $u_h$ that is piecewise linear on $\Omega\backslash \Gamma$ and has two degrees of freedom on $\Gamma$ to represent correctly the two lips of the fracture, denoting with $u_{N}^{-}$ the one on $[0,0.5]$ and $u_{N}^{+}$ the one on $[0.5,1]$.  We discretize the problem in the following way
\begin{equation}\label{fractdiscr}
J_h(u_h)=\frac{1}{2} \sum_{i=1}^{2N} 2N |a_i(u_i -u_{i-1})|^2+ \theta(\llbracket u_N \rrbracket),
\end{equation}
where if $i\leq N$ we identify $u_N=u_N^-$ while for $i>N, u_N=u_N^+$ and $a_h$ denotes the piecewise linear approximation of the material inhomogeneity function.  We remark that the jump of the displacement is not taken into account in the sum, and the gradient of $u$ is approximated with finite difference of first order.
 The Dirichlet condition is applied on $\partial \Omega=\{0,2l\}$ and the external displacememt is chosen as
 $$
 u(0,t)=0, \quad u(2l,t)=2lt.
 $$
 To enforce the boundary condition in the minimization process, we add it to the energy functional as a penalization term. Hence, we solve the following  unconstrained minimization problem
 \begin{equation}\label{ff1}
 \min N|A u_h -b|_2^2+\theta(\llbracket u_N \rrbracket),
 \end{equation}
 where the operator $A \in \R^{(2N+1)\times (2N+1)}$ is given by $A=RD$ where $R\in \R^{(2N+1)\times (2N+1)}$ is the diagonal operator with $i$-entries  $R_{ii}=a_i$
 and
 $$
A=\left[\begin{array}{c}
\bar D\\
0\, \,\cdots\,\, 0 \, \,\gamma\end{array} \right].
$$
Here $\bar D \in \R^{2N\times (2N+1)}$  denotes the backward finite difference operator  $D	\, : \,\R^{2N+1} \to \R^{2N+1}$
 without the $N+1$ row, where $D$ is defined as
 \begin{equation}\label{Dcontrol}
D=\left(\begin{array}{ccccc}
1& 0& 0& \cdots& 0\\ -1& 1& 0& \cdots& 0\\\vspace{0.2cm}\\ 0& \cdots& 0&-1&1\
\end{array}\right).
\end{equation}
  Moreover
$b \in \R^{2N+1}$ in \eqref{ff1} is given by $b=(0,\cdots, \gamma t_i)'$ and $\gamma$ is the penalization parameter. To compute the jump between the two lips of the fracture, we introduce the operator $D_f:\R^{2N+1} \to \R$ defined as $D_f=(0,\cdots, -1,1,0,\cdots,0)$  where $-1$ and $1$ are respectively in the $N$ and $N+1$ positions.
Then we write the functional \eqref{ff1} as follows
 \begin{equation}\label{ff2}
 \min N|A u_h -b|_2^2+ \theta( D_fu),
 \end{equation}
We consider the three different  penalizations given by the $\ell^\tau, \tau \in (0,1)$, the SCAD and the MCP penalties. Note that    $\mbox{KerA }=0$, hence assumptions $(a) (ii)$ and $(c)(ii)$ are satisfied and existence  of a minimizer for \eqref{ff2} is  guaranteed.

Our numerical experiments were conducted with a discretization in $2N$ intervals with $N=100$. The time step in the time discretization of $[0,T]$ with $T=3$ is set to $dt=0.01$. The parameters of the energy functional $J_h(u_h)$ are set to $\lambda=1, \gamma=50$. 

We remark  that in the following experiments the material function $a(x)$ was always chosen as the identity. For tests with more general $a(x)$, we refer to the two-dimensional experiments reported in the following subsection.
In  Figures $1,2$ we report our results obtained by \textbf{Algorithm $1$} respectively for the  models $\ell^p$ and SCAD.  In each figure we show  time frames to represent the evolutions of the crack for different values of the parameter $\tau$. Each time frame consists of  three different time steps $(t_1, t_2, t_3)$, where $t_2, t_3$ are chosen as the first instant where the prefacture and the fracture appear. 

We observe  the three phases that we expect from a cohesive fracture model:
\begin{itemize}
\item \textit{Pure elastic deformation}: in this case the jump amplitude is zero and the gradient of the displacement is constant in $\Omega \backslash \Gamma$;
\item \textit{Prefracture}: the two lips of the fracture do not touch each other, but they are not free to move. The elastic energy is still present.
\item \textit{Fracture}: the two parts are free to move. In this final phase the gradient of the displacement (and then the elastic energy) is zero.
\end{itemize}
We remark that the  formation  of the crack is anticipated for smaller values of $\tau$. As we see in Figure $1$, for $\tau=.01$  prefracture and fracture  are reached at $t=.3$ and $t=1.5$ respectively. As $\tau$ is increased to $\tau=.1$,  prefracture and fracture occur at $t=1$ and $t=3$ respectively. We observe the same phenomenon for the SCAD (see Figure $2$).

We tested our algorithm also for the MCP model, where no prefracture phase can be observed, that is,   the displacement breaks almost instantaneously  to reach the complete fracture. 


Finally we remark that in our  experiments the residue is  $O(10^{-16})$ and the number of iterations is small, e.g. $12, 15$ for $\tau=.01, .1$ respectively.


\subsubsection{Two-dimensional experiments}\label{twodim}
We consider the two-dimensional domain $\Omega=(0,1)\times (0,1)$ and   we chose  the one-dimensional subdomain $0.5\times (0,1)$ as the line of crack.
 We proceed in the discretization of the problem analogously as in subsection \ref{onedim}, that is, we divide $[0,1]$ into $2N$ intervals  and  approximate the displacement function with a function $u_h$ that is piecewise linear in $\Omega\backslash \Gamma$ and has two degrees of freedom on $\Gamma$ to represent correctly the two lips of the fracture. 
 Define the   operator $A \in \R^{(2N+1)\times (2N+1)}$  by 
 $$
A=\left[\begin{array}{c}
R^1D_1\\
R^2D_2\\
\gamma I_{m^2}\end{array} \right],
$$
  where $m=2N+2$, $R^1, R^2 \in \R^{(2N+1)\times (2N+1)}$ are two diagonal operators approximating the degree of homogeneity of the material, $D_1\in \R^{(m-1)(m-2)\times m^2}, D_2\in \R^{m(m-2)\times m^2}$ are defined as
$$
D_1=G_1((m-1)N+1: (m-1)N+2(m-1),:)=[\quad], \quad D_2=G_2(mN+1:mN+m,:)=[\quad ],
$$
where
$G_1, G_2 \in \R^{m(m-1)\times m^2}$ are defined as follows
$$
G_1=kron(I_m,D), \quad G_2=kron(D,I_m)
$$ and 
$D \in \R^{m-1\times m}$  denotes the following backward finite difference operator  
 \begin{equation}\label{Dcontrol}
D=\left(\begin{array}{cccccc}
-1& 1& 0& \cdots & \cdots& 0\\0 &-1& 1& 0& \cdots& 0\\\vspace{0.2cm}\\ 0& \cdots& \cdots& 0&-1&1\
\end{array}\right).
\end{equation}
 Again we  enforce the boundary condition by adding it to the energy functional as a penalization term. 
Hence, we solve the following  unconstrained minimization problem
 \begin{equation}\label{ff12dim}
 \min |A u_h -b|_2^2+\theta(D_f u),
 \end{equation}
where $b \in \R^{(m-2)(2m-1)+m^2}$ in \ref{ff12dim} is given by $b=(0,\cdots, \gamma g(t_i))'$, $g(t_i)$ is the discretization of the boundary datum $g$ at time $t_i$ and $\gamma$ is the penalization parameter.
Moreover the jump of  the crack is represented by the operator $D_f\in \R^{m\times m^2}$ defined as follows
$$
D_f=[0_{m,mN}, -I_m, I_m, 0_{m,m^2-mN-2m}]
$$
where by $0_{r,s}$ we denote the null matrix of dimension $r\times s$.

Our numerical experiments were conducted with a discretization in $2N$ intervals with $N=80$. The time step in the time discretization of $[0,T]$ with $T=3$ is set to $dt=0.01$. The parameters of the energy functional $J_h(u_h)$ are set to $\lambda=1, \gamma=50$.  We perform two different series of experiments with boundary data respectively resulting from evaluating $g_1, g_2$ on $\partial \Omega$, where
$$
g_1(t)(x)=(2x_1-0.5)t \quad \mbox{ for every } t \in [0,1], x=(x_1,x_2) \in \Omega
$$
and the other one with boundary datum
$$
g_2(t)(x)=2t \cos(4(x_2-0.5))(x_1-0.5)\quad \mbox{ for every } t \in [0,1], x=(x_1,x_2) \in \Omega.
$$
In Figures $3,4,5,6$ we show the results obtained with boundary datum $g_1$ for each of the considered models, that is, $\ell^\tau$, SCAD and MCP and in Figures $7$ the ones with boundary datum $g_2$ for the $\ell^\tau$ model. In the case of boundary datum $g_2$ we tested our algorithm also on the SCAD and the MCP models, obtaining similar results to the ones shown in Figure $7$. In these first experiments, the diagonal operators $R_1, R_2$ are taken as the identity, that is, we suppose to have an homogeneous material.

As expected from a cohesive fracture model, we observe the three  phases of pure elastic deformation, prefracture and fracture. 

Also,  prefracture and fracture are  reached at different times for different values of $\tau$, typically they are anticipated for smaller values of $\tau$. 

When the boundary datum is $g_2$, that is, not constant in the $y$ direction, we note that the fracture is reached before in the part of the fracture line corresponding to the part of the boundary where the datum is bigger.

In Figures $8,9,10$ we tested the algorithm in case of a non homogeneous material. In Figure $8$ we show the result for  a two-material specimen, that is, we took
 \begin{equation}\label{R1}
\left\{
\begin{array}{lll}
R^1_{ii}=600 & i=1, \cdots, (m-1)(N+1), \\
R^1_{ii}=1 & i=(m-1)(N+1)+1, \cdots, 2(m-1)(N+1)
\end{array}
\right.\,
\end{equation}
\begin{equation}\label{R2}
\left\{
\begin{array}{lll}
R^2_{ii}=600 & i=1, \cdots, mN, \\
R^2_{ii}=1 & i=mN+1, \cdots, 2mN
\end{array}
\right.\,
\end{equation}
Note that, for the above values of $R^1, R^2$, the slides of the specimen show an asymmetric behaviour, namely  the displacement is flatter where the material function is bigger (that is, when $R_{ii}(x)=600$).
 
In Figure $9,10$ we report the results when $R^1,R^2$ are the discretization of the following function
\begin{equation}\label{Rr}
\left\{
\begin{array}{lll}
r(x,y)=400exp(y), & \mbox{ for } x\leq N \\
r(x,y)=400y & \mbox{ otherwise}
\end{array}
\right.\,
\end{equation}
Note that in Figure $10$ the boundary datum is chosen as
$$
g_3(t)=\frac{1}{100}cos(2(y-0.5))(x-0.5).
$$
As expected due to the choice of $R^1, R^2$, we remark an asymmetric behaviour of the fracture in the $y$ direction, namely the specimen brakes before where the material function is higher.

\subsection{Comparison with GIST}\label{compg}
In this section we present the result of experiments to compare the performance of \textbf{Algorithm 1} with  the following two other algorithms for nonconvex and nonsmooth minimization.
We first compare with the GIST "General Iterative Shrinkage and Thresholding" algorithm for $\ell^\tau$, $\tau<1$ minimization. We took advantage of the fact that for GIST\footnote{The reference paper is \cite{tuia}, the toolbox can be found in https://github.com/rflamary/nonconvex-optimization. } an open source toolbox is available, which facilitated an unbiased comparison.  Moreover, in \cite{LLSZ}  several tests were made to compare  GIST and  IRLS "Iteratively reweighted least squares", showing that  the two algorithms have nearly the same performance, with only significant difference in speed, where  GIST appears to be the faster one. 

Concerning $\ell^1$-minimization based algorithms, we compared our algorithm with the FISTA "Fast Iterative Shrinkage-Thresholding Algorithm", see subsection \ref{compg}.

We  remark that the results of \cite{LLSZ} show  no particular differences in the performance of the algorithm for different values of $\tau$, except that the speed becomes much worse for p near to $1$, say $\tau=0.9$.  Motivated also by this observations, the comparisons explained in the following were made for one fixed value of $\tau$.

The comparison is carried out through the following three examples, the academical M-matrix problem, an optimal control problem and a microscopy imaging reconstruction example. 

The monotone algorithm is stopped when the $\ell^\infty$-residue of the optimality condition \ref{optcondeps2m} is of the order of $10^{-3}$ in the $M$-matrix and optimal control problems and of the order of $10^{-8}$ in the imaging example. GIST is terminated if the relative change of the two consecutive objective function values is less than $10^{-5}$ or the number of iterations exceeds $1000$. We remark that no significant changes were remarked by setting a lower tolerance than $10^{-5}$ or a bigger number of maximal iteration for  GIST.

Since both  GIST and the FISTA solve the problem \eqref{optprobphi}  when the operator $\Lambda$ coincides with the identity, we also make this choice in the following subsections. Finally we remark that the three examples were analysed already in \cite{GK}  with different aims.

\subsubsection{M-matrix example}
We consider
\begin{equation}
\label{optprobM}
\min_{x \in \R^{n\times n}}J(x)= \min_{x \in \R^{n\times n}}\frac{1}{2}|A x-b|_2^2+\lambda |x|^\tau_\tau,
\end{equation}
$A$ is the forward finite difference gradient
$$
A=\left(\begin{array}{c} G_1\\G2\end{array}\right),
$$
with $G_1 \in \R^{n(n+1)\times n^2}, G_2 \in \R^{n(n+1)\times n^2}$ as
$$
G_1=I \otimes D, \quad G_2=D \otimes I,
$$
$I$ is the $n\times n$ identity matrix, $\otimes$ the tensor product, $D=(n+1)\tilde{D},$ $ \tilde{D} \in \R^{(n+1)\times n}$ is 
$$
\left(\begin{array}{ccccc}
1& 0& 0& \cdots& 0\\ -1& 1& 0& \cdots& 0\\ \vspace{0.2cm}\\ 0& \cdots& 0&-1&1\\0&\cdots&0&0&-1 
\end{array}\right).
$$
Then $A^T A$ is an $M$ matrix coinciding with the $5$-point star discretization on a uniform mesh on a square of the Laplacian with Dirichlet boundary conditions.  Moreover \eqref{optprobM} can be equivalently expressed as
\begin{equation}
\label{optprob2}
\min_{x \in \R^{n\times n}}\frac{1}{2}|A x|_2^2-(x,f)+\lambda |x|^\tau_\tau,
\end{equation}
where $f=A^T b$. If $\lambda=0$ this is the discretized variational form of the elliptic equation
\begin{equation}
\label{elleq}
-\Delta y=f \mbox{ in } \Omega, \quad y=0 \mbox{ on } \partial \Omega.
\end{equation}
For $\lambda>0$ the variational problem \eqref{optprob2} gives a sparsity enhancing solution for the elliptic equation \eqref{elleq}, that is, the displacement $y$ will be $0$ when the forcing $f$ is small.
Our tests are conducted with $f$ chosen as discretization of $f=10 x_1\mbox{sin}(5x_2) \mbox{cos}(7 x_1)$. The inizialization is chosen as the solution of the corresponding non-sparse optimization problem.\\
We  remark that in \cite{GKproc} and  \cite{GK} the algorithm was also tested   in the same situation for different values of $\tau$ and $\lambda$, showing,   in particular and consistent with our expectations, that the sparsity of the solution increases with $\lambda$. 

Here we focus on the comparison between the performances of \textbf{Algorithm 1} and GIST. In order to compare the two schemes, we focus on the value of the unregularized functional $J$ in \eqref{optprobM} reached by both algorithms, the time to acquire it, and the number of iterations.  Our tests were conducted   for $\tau=0.5$, and $\lambda$ incrementally increasing from $10^{-3}$ to $0.3$, see the following tables.  The parameter $\eps$ was decreased from $10^{-1}$ to $10^{-6}$.    These values are reported in Table  $1,2$ and $3,4$ for  GIST and \textbf{Algorithm 1} respectively. 

We observe that \textbf{Algorithm 1} achieves always lower values of the functional J, but in a longer time. The number of iterations needed by \textbf{Algorithm 1} is smaller than the number of iterations of  GIST for small values of $\lambda$, more precisely for $\lambda <0.1$. Note that  for smaller $\lambda$ the number of iterations of \textbf{Algorithm 1} is smaller than the one of  GIST.  This suggests, consistent with our expectation, that the monotone scheme is slower than  GIST mainly because it solves a nonlinear equation at each iteration. 

We carried out a further test in order to measure the timing performance of \textbf{Algorithm 1}, that is, the algorithm is stopped as soon  as the value of J achieved by GIST is reached. 
In Table $4,5$ we report the time, the number of iterations, the values of J, and the value of $\eps$ reached. We observe that the time is almost always smaller than the one of  GIST, except for values of $\lambda$ bigger or equal than $\lambda=0.15$. Also, for these values, the differences in the  time are very small.

\begin{table}[h!]	
	
\begin{center}
\begin{tabular}{|c|c|c|c|c|c|c|}
	\hline\noalign{\smallskip}
		 {\bf $\lambda$ } & 0.01& 0.05 &  0.10&0.15&0.2 & 0.3  \\
		\noalign{\smallskip}
		\hline
		\noalign{\smallskip}
	J & 246.324 & 264.232 & 285.26& 303.685 & 319.737 &338.998\\
		 time & 0.563 & 0.701  & 0.444 &0.468 & 0.461 & 0.61\\
		 iterations & 293& 384  & 249&247 & 216 & 209 \\
		\noalign{\smallskip}\hline

		 \end{tabular}
		 \caption{M-matrix example. Value of J, time,  iterations of  GIST.}
		 \end{center}
\end{table}

\begin{table}[h!]
		 \begin{center}
		\begin{tabular}{|c|c|c|c|c|c|c|}
	\hline\noalign{\smallskip}
		 {\bf $\lambda$ } & 0.01& 0.05 &  0.10&0.15&0.2 & 0.3  \\
		 \noalign{\smallskip}
		 \hline
		 \noalign{\smallskip}
		 J & 246.186 & 263.92  & 284.079&  301.327 & 315.553 & 331.71 \\
		 time & 10.92 & 26.142 & 56.397& 33.021 & 124.624 & 31.423 \\
		 iterations & 149 & 361 & 779 & 456 & 1722 & 433\\
		 \noalign{\smallskip}\hline
		 \end{tabular}
		 \caption{M-matrix example. Value of J, time,  iterations of \textbf{Algorithm 1}.}
		 \end{center}
\end{table}

\begin{table}[h!]
\begin{center}
\begin{tabular}{|c|c|c|c|c|c|c|c|} 
\hline\noalign{\smallskip}
{\bf $\lambda$}& 0.001 & 0.01& 0.05 &  0.1&0.15&0.2  \\
		 \noalign{\smallskip}
		 \hline
		 \noalign{\smallskip}
		 J$_{\mbox{\footnotesize{GIST}}}$ &242.158 & 246.324 & 264.232 & 285.26 & 303.685 & 319.737 \\
		 iter$_{\mbox{\footnotesize{mon}}}$ & 1 & 1 & 5  &5 & 6 & 7 \\
		 time$_{\mbox{\footnotesize{mon}}}$ & 0.085 & 0.082& 0.39&  0.387 &0.478& 0.673 \\
		 time$_{\mbox{\footnotesize{GIST}}}$ & 0.445 & 0.563 & 0.701  & 0.444  & 0.468 &0.461  \\
		\noalign{\smallskip}
		\hline
		 \end{tabular}
		 \caption{M-matrix example. Value of the functional, iterations, time to which \textbf{Algorithm 1} overcome  GIST's.}
		 \end{center}
\end{table}

\subsubsection{Optimal control problem}
We consider the linear control system
$$
\frac{d}{dt} y(t)=\mathcal{A} y(t)+B u(t), \quad y(0)=0,
$$
that is,
\begin{equation}\label{LCSfinalstate}
y(T)=\int_0^T e^{\mathcal{A}(T-s)} B u(s) ds,
\end{equation}
where the linear closed operator $\mathcal{A}$ generates a $C_0$-semigroup $e^{\mathcal{A}t}$, $t\geq 0$ on the Hilbert space $X$. More specifically, we consider the  one-dimensional controlled heat equation for $y=y(t,x)$:
\begin{equation}\label{actionscontrol}
y_t=y_{xx}+b_1(x)u_1(t)+b_2(x)u_2(t), \quad x \in (0,1),
\end{equation}
with homogeneous boundary conditions $y(t,0)=y(t,1)=0$ and thus $X=L^2(0,1)$. The differential operator $\mathcal{A}y=y_{xx}$ is discretized in space by the second order finite difference approximation with $n=49$ interior spatial nodes ($\Delta x=\frac{1}{50}$). We use two time dependent controls $\overrightarrow u=(u_1,u_2)$ with corresponding spatial control distributions $b_i$ chosen as step functions:
$$
b_1(x)=\chi_{(.2,.3)}, \quad b_2(x)=\chi_{(.6,.7)}.
$$
The control problem consists in finding the control function $\overrightarrow u$ that steers the state $y(0)=0$ to a neighbourhood of the desired state $y_d$ at the terminal time $T=1$. We discretize the problem in time by the mid-point rule, i.e.
\begin{equation}\label{midpoint}
A \overrightarrow  u=\sum_{k=1}^m e^{\mathcal{A}\left(T-t_{k}-\frac{\Delta t}{2}\right)} (B \overrightarrow u)_k \Delta t,
\end{equation}
where $\overrightarrow u=(u_1^1,\cdots, u_1^m,u_2^1,\cdots u_2^m)$ is a discretized control vector whose coordinates represent the values at the mid-point of the intervals $(t_k,t_{k+1})$. Note that in \eqref{midpoint} we denote by $B$ a suitable rearrangement of the matrix $B$ in \eqref{LCSfinalstate} with some abuse of notation. A uniform step-size $\Delta t=\frac{1}{50}$ ($m=50$) is utilized. The solution of the control problem is based on the sparsity formulation \eqref{optprobphi}, where  $\Lambda=I$ and $\phi_{\lambda, \tau}(x)=\lambda |x|^{\tau}$ and 
 $b$ in \eqref{optprobphi} is the discretized target function chosen as the Gaussian distribution $y_d(x)=0.4\,\mbox{exp}(-70(x-.7)^2))$ centered at $x=.7$.
 That is, we apply our algorithm for the discretized optimal control problem in time and space where $x$ from \eqref{optprobphi} is the discretized control vector $u \in \R^{2m}$ which is mapped by $A$ to the discretized output $y$ at time $1$ by means of \eqref{midpoint}. Moreover $b$ from \eqref{optprobphi} is the discretized state $y_d$ with respect to the spatial grid $\Delta x$. The parameter $\eps$ was initialized with $10^{-3}$ and decreased down to $10^{-8}$.

Similarly as in the previous subsection, we compare the values of the functional, the time and the number of iterations. The experiments are carried out for $\tau=0.5$ and $\lambda$ in the interval $10^{-3}$-$0.2$. We report only the values for the second control $u_2$ since the first control $u_1$ is always zero (as expected).

As  can be  seen from the following tables, the same kind of remarks as in the previous subsection apply. In particular  GIST is  faster but less precise than \textbf{Algorithm 1}, but \textbf{Algorithm 1} overcomes the value reached by  GIST more rapidly.

	\begin{table}[h!]	
\begin{center}
\begin{tabular}{|c|c|c|c|c|c|c|c|c|} 
\hline\noalign{\smallskip}
		   $\lambda$& 0.0001 & 0.001& 0.01 &  0.2\\
		 \noalign{\smallskip}
		 \hline
		 \noalign{\smallskip}
		 J & 0.044 & 0.073 & 0.599 & 0.599\\
		 time & 0.296 & 0.047 & 0.04  & 0.037 \\
		 iterations  & 222& 157& 3& 3  \\
		 \noalign{\smallskip}
		\hline
		 \end{tabular}
		 \caption{Optimal control problem. Value of J, time, iterations of  GIST. }
		 \end{center}
\end{table}

\begin{table}[h!]
		 \begin{center}
		 \begin{tabular}{|c|c|c|c|c|c|c|c|c|}
		 \hline\noalign{\smallskip}
		 $\lambda$ & 0.0001& 0.001&0.01 & 0.2 \\
		\noalign{\smallskip}
		 \hline
		 \noalign{\smallskip}
		 J  & 0.042 &  0.068& 0.185 & 0.599\\
		 time&  11.758 & 15.140 & 14.866 & 12.501 \\
		 iterations  & 35 & 28 & 32 & 27\\
		 \noalign{\smallskip}
		\hline
		 \end{tabular}
		 \caption{Optimal control problem. Value of J, time, iterations of \textbf{Algorithm 1}.}
		 \end{center}
\end{table}

\begin{table}[h!]
\begin{center}
\begin{tabular}{|c|c|c|c|c|c|c|c|c|} 
\hline\noalign{\smallskip}
		   $\lambda$& 0.0001 & 0.001& 0.01 &  0.2\\
		\noalign{\smallskip}
		 \hline
		 \noalign{\smallskip}
		 J$_{\mbox{\footnotesize{mon}}}$ & 0.043 & 0.071 & 0.185  &  0.599 \\
		 iter$_{\mbox{\footnotesize{mon}}}$ & 1 & 1 & 5  &5 \\
		  time$_{\mbox{\footnotesize{mon}}}$ & 2.2 & 0.1& 0.39&  0.025 \\
		  time$_{\mbox{\footnotesize{GIST}}}$ & 0.296 & 0.047 & 0.04  & 0.037 \\
		 \noalign{\smallskip}
		\hline
		 \end{tabular}
		 \caption{Optimal control problem. Value of J, iterations, time for which \textbf{Algorithm 1} overcomes  GIST's.}
		 \end{center}
\end{table}

\subsubsection{Compressed sensing approach for microscopy  image reconstruction}\label{mimrec}
We compare \textbf{Algorithm 1} and  GIST in a microscopy imaging problem, in particular we focus on the STORM  (stochastic optical reconstruction microscopy) method,  based on stochastically switching and high-precision detection of single molecules  to achieve an image resolution beyond the diffraction limit.  The literature on the STORM has  been intensively increasing, see e.g. \cite{RBZ}, \cite{BPS} \cite{HGM}, \cite{HBZ}. We refer in particular to \cite{GK}   for a detailed description of the method and for more references.

Our approach is based on the following constrained-minimization problem: 
\begin{equation}\label{minprobimage}
 \min_{x \in \R^n} |x|^\tau_{\tau} \quad \mbox{ such that } \, \,|A x-b|_2 \leq \eps,
\end{equation}
where $\tau \in (0,1]$, $x$ is the up-sampled, reconstructed image, $b$ is the experimentally observed image, and $A$ is the impulse response (of size $m\times n$, where $m$ and $n$ are the numbers of pixels in $b$ and $x$, respectively). $A$ is usually called the  point spread function (PSF) and  describes the response of an imaging system to a point source or point object.   
Problem \eqref{minprobimage} can be reformulated as: 
\begin{equation}\label{minprobimagebeta}
\min_{x \in \R^n} \frac{1}{2}|Ax-b|^2_2+\lambda |x|^\tau_\tau.
\end{equation}

%
%
First we tested the procedure for same resolution images, in particular the conventional and the true images are both $128\times 128$ pixel images.  
 Then the algorithm was tested in the case of a $16\times 16$ pixel conventional image and a $128 \times 128$ true image. 
The values for the impulse response $A$ and the measured data $b$ were chosen according to the literature, in particular $A$ was taken as the Gaussian PSF matrix with variance $\sigma=8$ and  size $3\times \sigma=24$, and $b$ was simulated by convolving the impulse response $A$ with a random $0$-$1$ mask over the image  adding a white random noise so that the signal to noise ratio is  $.01$. 

 We carried out several tests with the same data for different values of  $\tau,\lambda$. We report only our results for  $\tau=.1$ and $\lambda=10^{-6}, \lambda=10^{-9}$ for the same and the  different resolution case respectively, since for these values the best reconstructions were achieved. We focus on two different type of images, a sparse $0$-$1$ cross-like image and the standard phantom image.
In order to compare the performance of \textbf{Algorithm 1} and the GIST algorithm, we focus on the number of surplus emitters (Error+) and missed emitters (Error-) recovered   in the case of the cross image and different resolution.  The errors are computed on an  average over six recoveries for different values of the noise. The graphics of the errors against the noise are reported in Figures $11$ and $12$  for \textbf{Algorithm 1} and GIST respectively. We remark that these quantities  are typically used as a measure of the efficacy of the reconstruction method, see for example \cite{DP} (where, under certain conditions, a linear decay with respect to the noise is proven) and \cite{C}. 

The results shows that by  GIST the Error- is always $197$, whereas by \textbf{Algorithm 1} is always under $53$ and even smaller for small values of the noise. On the other hand, the Error+ by  GIST is always 0 and by \textbf{Algorithm 1} is zero for small values of the noise and then monotonically increasing until it reaches  $175$ when the noise is equal to $0.1$. Consistently with what expected, by \textbf{Algorithm 1} the graphics show a linear decay w.r.t. the noise, differently from the behaviour showed by  GIST. Moreover, the results found by \textbf{Algorihtm 1} lead to  more accuracy in the recovery, in the sense that the quantity of missed emitters is smaller, whereas on the other hand  GIST seems to lead to a more sparser solutions (since the Error+ is 0 by GIST).

Finally we remark that in the case of the cross image  GIST is faster than our algorithm, consistently with the result presented in the previous subsection and as expected, since our algorithm solves a nonlinear equation for each minimization problem. On the other hand, in the case of the standard phantom image  GIST results to be far slower than \textbf{Algorithm 1}. 

In Figure $13$ we report the results obtained in the same situation by the FISTA "Fast Iterative Shrinkage Thresholding Algorithm" for $\ell^1$ minimization. 
 We remark that by the FISTA the Error+  is always above $400$, whereas by \textbf{Algorithm $1$} is zero for small value of the noise.  This shows that \textbf{Algorithm 1} leads to more sparsity with respect to the FISTA, consistently with our expectation since the FISTA is based on $\ell^1$ minimization. 

\section{Conclusions}
We have developed a  monotone convergent algorithm for a class of nonconvex nonsmooth optimization problems
arising in the modelling of fracture mechanics and in imaging reconstruction, including the $\ell^\tau, \tau \in (0,1]$, the smoothly clipped absolute deviation and the minimax concave penalty. Theoretically, we established the existence of a minimizer of the original problem under  assumptions implying coercivity of the functional. Then we derived  necessary optimality conditions for a regularized version of the original problem. The optimality conditions for the regularized problem were solved through a monotonically convergent scheme based on an iterative procedure. We proved the convergence of the iteration procedure  under the same assumptions that guarantee existence. A remarkable result is the strict monotonicity of the functional along the sequence of iterates generated by the scheme.  Moreover we proved the convergence of the regularized problem to the original one as the regularization parameter goes to zero.  

The procedure is very efficient and accurate. The efficiency and accuracy of the procedure was verified by numerical tests simulating the evolution of cohesive fractures  and microscopy imaging.
An issue of high relevance to us was the comparison of the scheme to  two alternative algorithms, the GIST "General Iterative Shrinkage and Thresholding" algorithm for $\ell^\tau$ minimization,  with $\tau$ strictly positive and less than $1$  and  the FISTA "Fast Iterative Shrinkage-Thresholding Algorithm"  for $\ell^1$ minimization. We first compared  with GIST by focusing on the infimal value reached by the iteration procedure and on the computing time. Our results showed that the monotone algorithm is able to reach a smaller value of the objective functional  when compared to GIST's, therefore leading to a better accuracy. Finally we compared our scheme with FISTA in sparse recovery related to  microscopy imaging. The results showed that the monotone scheme lead to  more sparsity with respect to FISTA, as expected since  FISTA concerns $\ell^1$ minimization.

\begin{figure}[!ht]
\centering
\subfloat[$\footnotesize{t=.2}$]{\includegraphics[height=4cm, width=1.7cm]{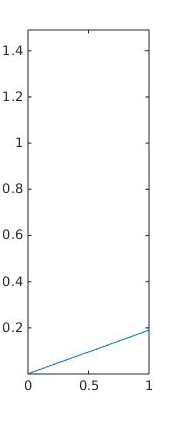}}
\subfloat[$\footnotesize{t=.3}$]{\includegraphics[height=4cm, width=1.7cm]{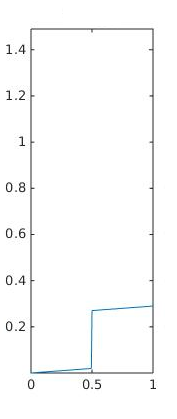}}
\subfloat[$\footnotesize{t=1.5}$]{\includegraphics[height=4cm, width=1.7cm]{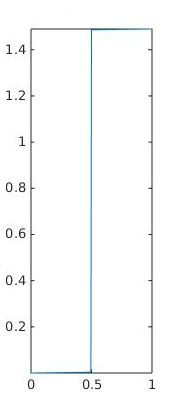}}
%
\hspace{0.5cm}
\subfloat[$\footnotesize{t=.9}$]
{
\includegraphics[height=4cm, width=1.7cm]{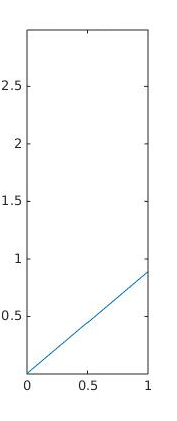}
}
\subfloat[$\footnotesize{t=1}$]
{
\includegraphics[height=4cm, width=1.7cm]{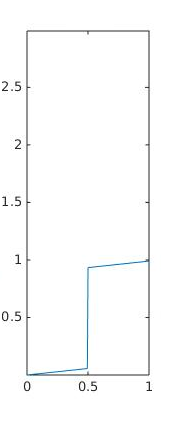}
}
\subfloat[$\footnotesize{t=3}$]
{
\includegraphics[height=4cm, width=1.7cm]{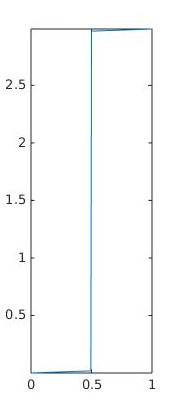}
}
\caption{Three time-step evolution of the displacement for  $\tau=.01$, $t = .2, .3, 1.5$  (left), $\tau=.1$, $t=.9, 1, 3$ (right). Results obtained by \textbf{Algorithm $1$}.}
\end{figure}
\vspace{0.3cm}

\begin{figure}[!ht]
\centering
\subfloat[$\footnotesize{t=1}$]{\includegraphics[height=4cm, width=1.7cm]{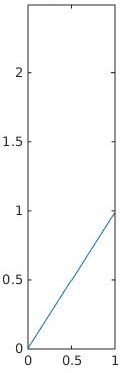}}
\subfloat[$\footnotesize{t=2.1}$]{\includegraphics[height=4cm, width=1.7cm]{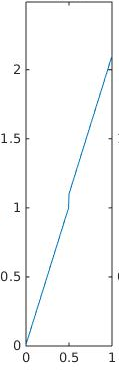}}
\subfloat[$\footnotesize{t=2.2}$]{\includegraphics[height=4cm, width=1.7cm]{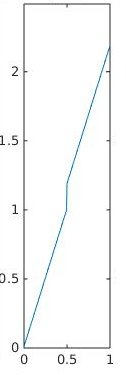}}
\subfloat[$\footnotesize{t=2.5}$]{\includegraphics[height=4cm, width=1.7cm]{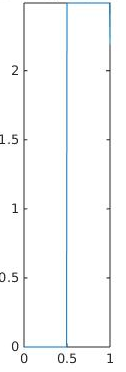}}
%
\hspace{0.4cm}
\subfloat[$\footnotesize{t=.1}$]{\includegraphics[height=4cm, width=1.7cm]{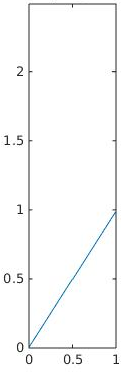}}
\subfloat[$\footnotesize{t=2.1}$]{\includegraphics[height=4cm, width=1.7cm]{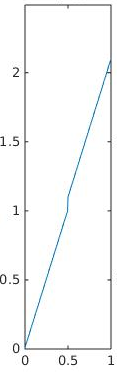}}
\subfloat[$\footnotesize{t=2.2}$]{\includegraphics[height=4cm, width=1.7cm]{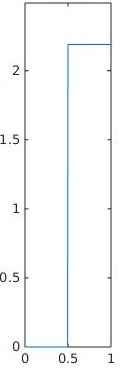}}
\subfloat[$\footnotesize{t=2.5}$]{\includegraphics[height=4cm, width=1.7cm]{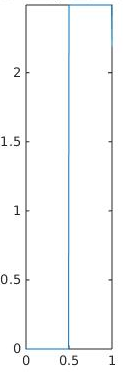}}
\caption{Four time-step evolution of the displacement for the SCAD model,  $\tau=20$,  $t=1, 2.1, 2.2, 2.5$   (left), $\tau=10$, $t = .1, 2.1, 2.2, 2.5$ (right). Results obtained by \textbf{Algorithm $1$}.}
\end{figure}

\vspace{1cm}

\begin{figure}[!ht]
\subfloat[$t=0.1, \tau=0.001$]
{
\includegraphics[height=4.5cm, width=4.5cm]{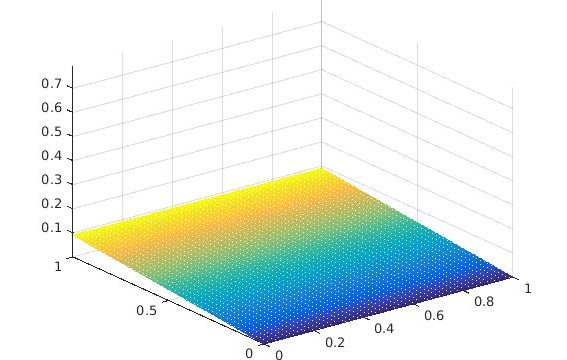}
}
\subfloat[$t=0.8, \tau=0.001$]
{
\includegraphics[height=4.5cm, width=4.5cm]{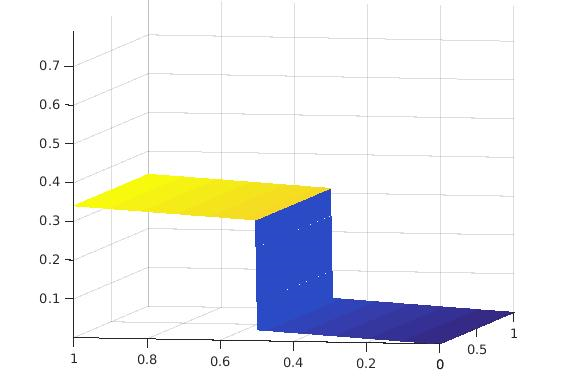}
}
\subfloat[$t=0.8, \tau=0.001$]
{
\includegraphics[height=4.5cm, width=4.5cm]{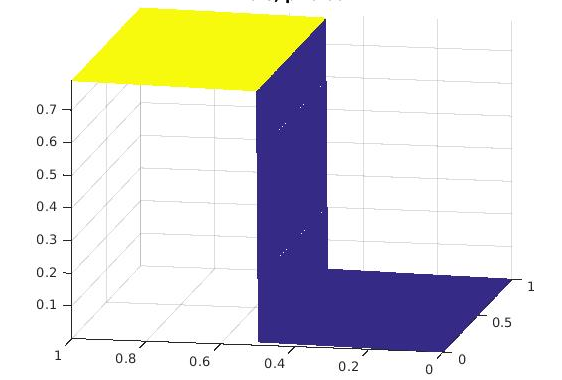}
}
\caption{Displacement, $\theta(\cdot)=|\cdot|_\tau^\tau$, with $\tau=0.001$, $R^1=R^2=I$, and boundary datum $g=g_1$}
\end{figure}

\vspace{2cm}

\begin{figure}[!ht]
\subfloat[$t=0.1, \tau=0.01$]
{
\includegraphics[height=4.5cm, width=4.5cm]{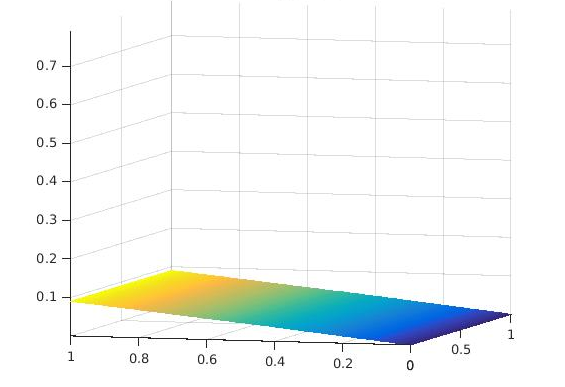}
}
\subfloat[$t=0.35, \tau=0.01$]
{
\includegraphics[height=4.5cm, width=4.5cm]{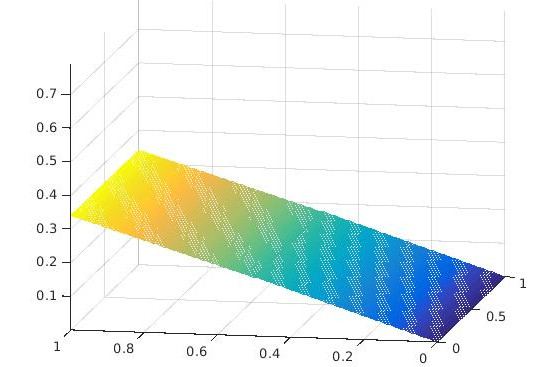}
}
\subfloat[$t=0.8, \tau=0.01$]
{
\includegraphics[height=4.5cm, width=4.5cm]{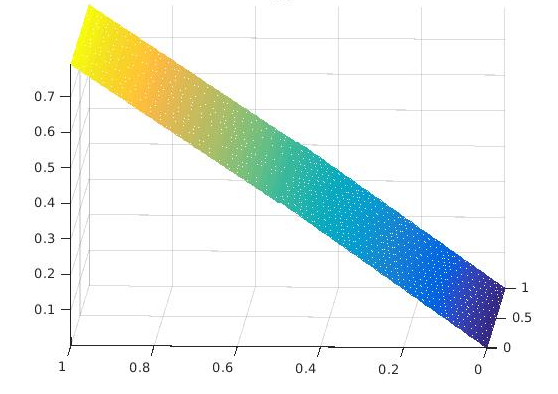}
}
\newline
\hspace{-5cm}
\subfloat[$t=0.1, \tau=0.0001$]
{
\includegraphics[height=4.5cm, width=4.5cm]{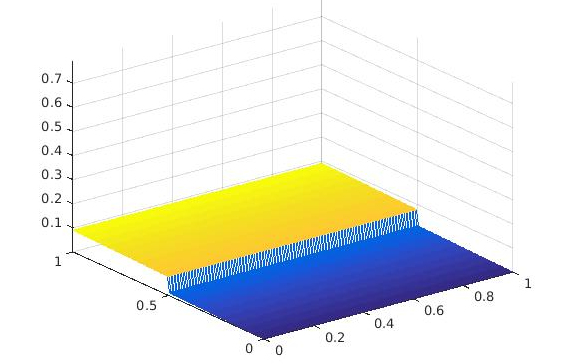}
}
\subfloat[$t=0.8, \tau=0.0001$]
{
\includegraphics[height=4.5cm, width=4.5cm]{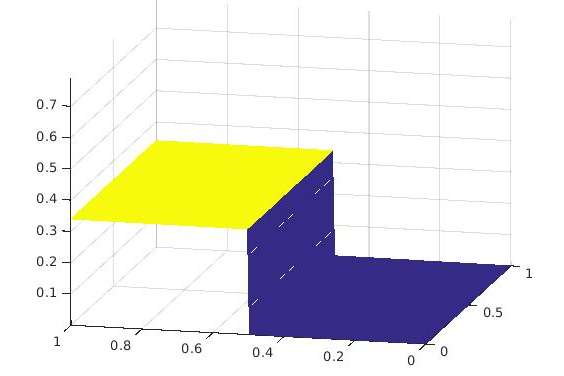}
}
\subfloat[$t=0.8, \tau=0.0001$]
{
\includegraphics[height=4.5cm, width=4.5cm]{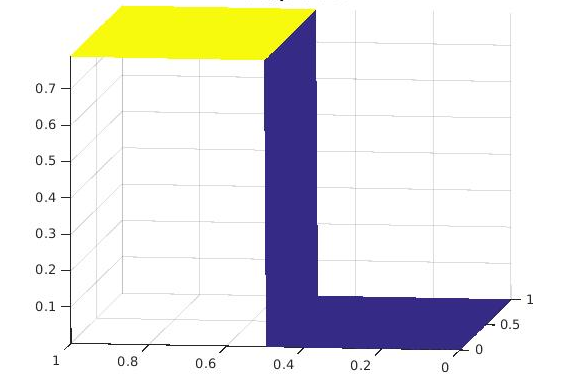}
}
\caption{Displacement, $\theta(\cdot)=|\cdot|_\tau^\tau$, comparison between $\tau=0.01$ and $\tau=0.0001$, $R^1=R^2=I$,  boundary datum $g=g_1$.}
\end{figure}

\vspace{1cm}

\begin{figure}[!ht]
\subfloat[$t=1.5, \tau=20$]
{
\includegraphics[height=3.5cm, width=3.5cm]{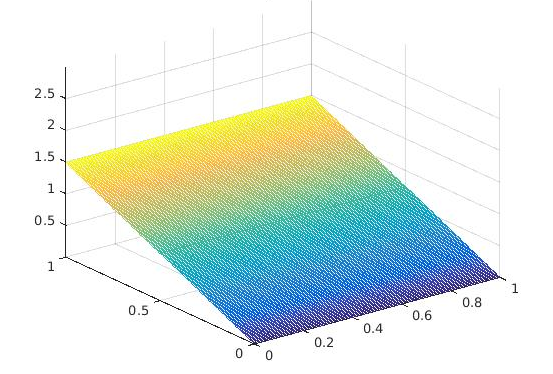}
}
\subfloat[$t=1.7, \tau=20$]
{
\includegraphics[height=3.5cm, width=3.5cm]{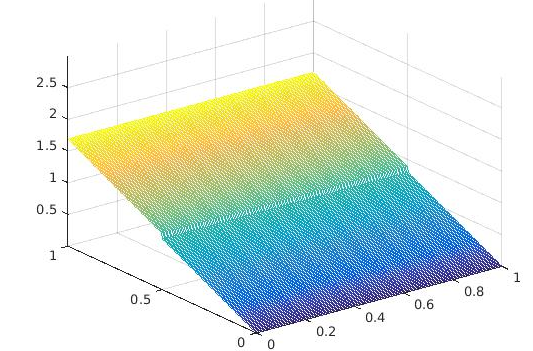}
}
\subfloat[$t=2.5, \tau=20$]
{
\includegraphics[height=3.5cm, width=3.5cm]{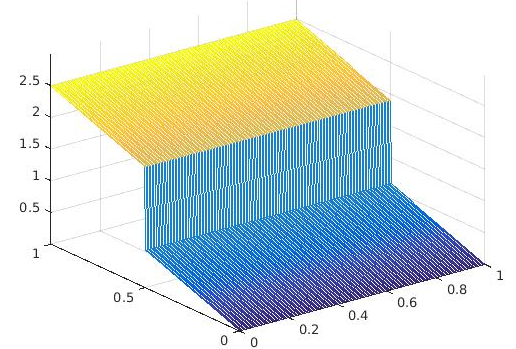}
}
\subfloat[$t=3, \tau=20$]
{
\includegraphics[height=3.5cm, width=3.5cm]{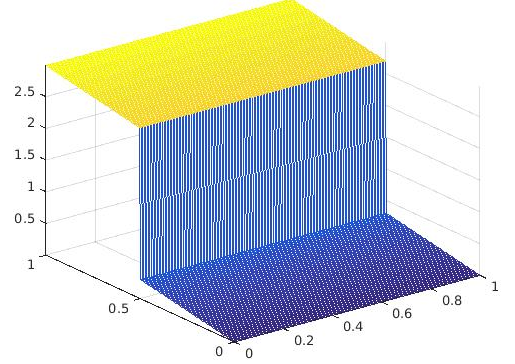}
}
\newline
\hspace{-5cm}
\subfloat[$t=1, \tau=15$]
{
\includegraphics[height=3.5cm, width=3.5cm]{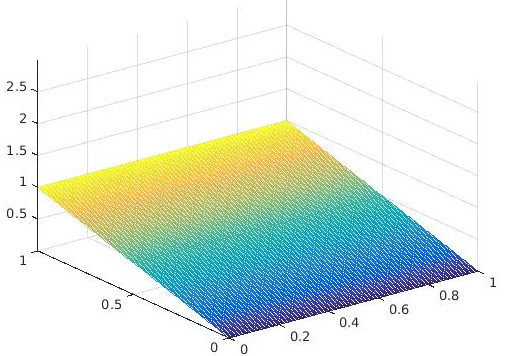}
}
\subfloat[$t=2.8, \tau=15$]
{
\includegraphics[height=3.5cm, width=3.5cm]{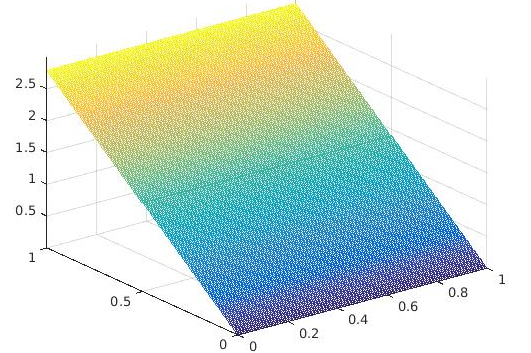}
}
\subfloat[$t=3, \tau=15$]
{
\includegraphics[height=3.5cm, width=3.5cm]{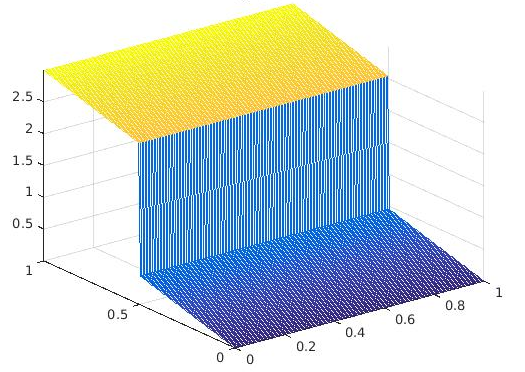}
}
\caption{Displacement for the $\mbox{SCAD}$ model, comparison between $\tau=20$ and $\tau=15$, $R^1=R^2=I$, boundary datum $g=g_1$.}
\end{figure}

\begin{figure}[!ht]
\subfloat[$t=1.5, \tau=20$]
{
\includegraphics[height=3.3cm, width=3.5cm]{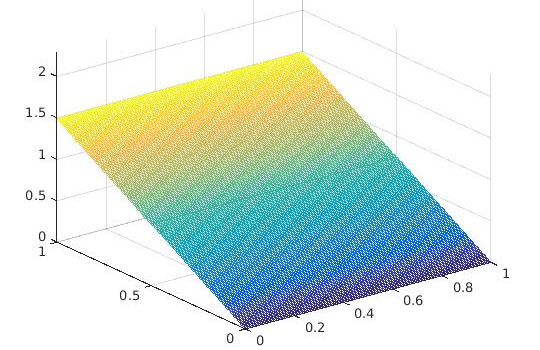}
}
\subfloat[$t=1.8, \tau=20$]
{
\includegraphics[height=3.3cm, width=3.5cm]{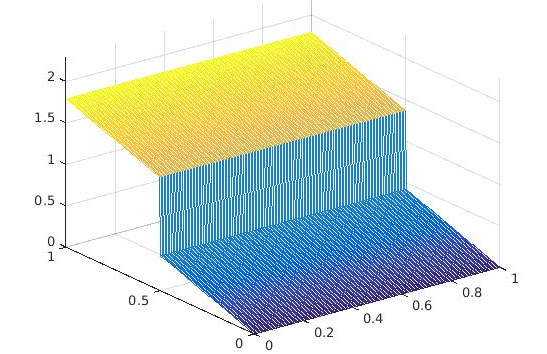}
}
\subfloat[$t=2.3, \tau=20$]
{
\includegraphics[height=3.3cm, width=3.5cm]{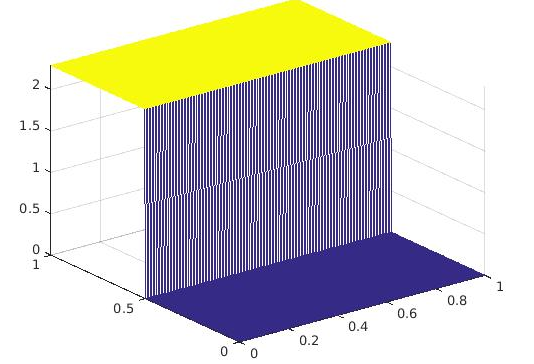}
}
\newline
\hspace{-5cm}

\subfloat[$t=1.5, \tau=10$]
{
\includegraphics[height=3.3cm, width=3.5cm]{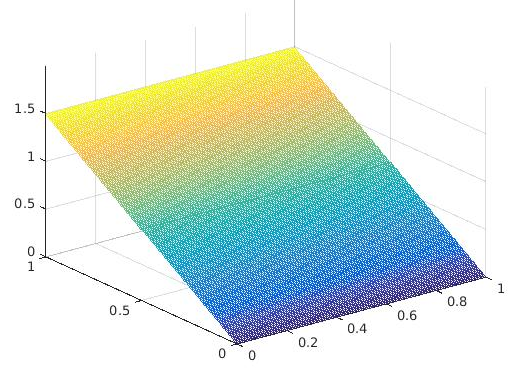}
}
\subfloat[$t=1.56, \tau=10$]
{
\includegraphics[height=3.3cm, width=3.5cm]{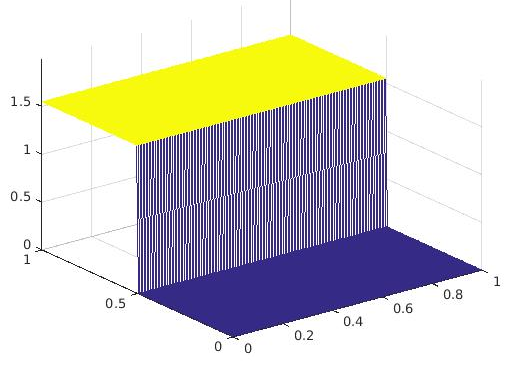}
}
\subfloat[$t=2, \tau=10$]
{
\includegraphics[height=3.3cm, width=3.5cm]{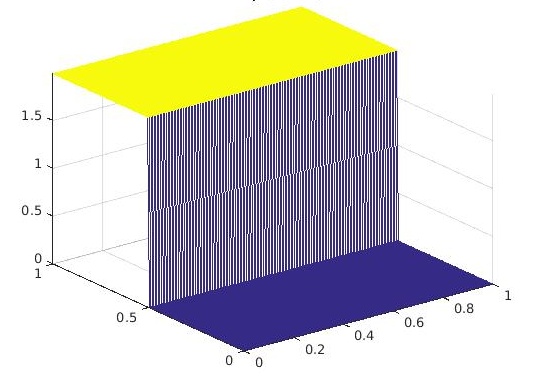}
}
\caption{Displacement for the MCP model, comparison between $\tau=20$ and $\tau=10$, $R^1=R^2=I$, boundary datum $g=g_1$.}
\end{figure}

\begin{figure}[!ht]

\subfloat[$t=0.4, \tau=0.001$]
{
\includegraphics[height=3.2cm, width=3.5cm]{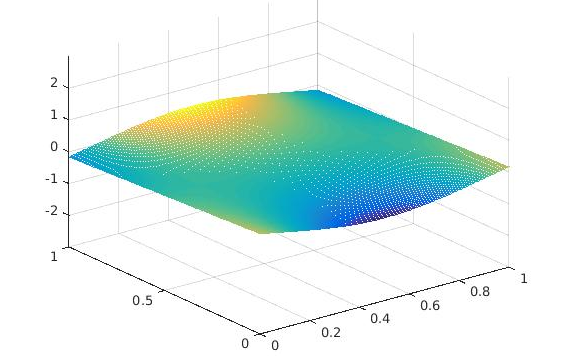}
}
\subfloat[$t=2, \tau=0.001$]
{
\includegraphics[height=3.2cm, width=3.5cm]{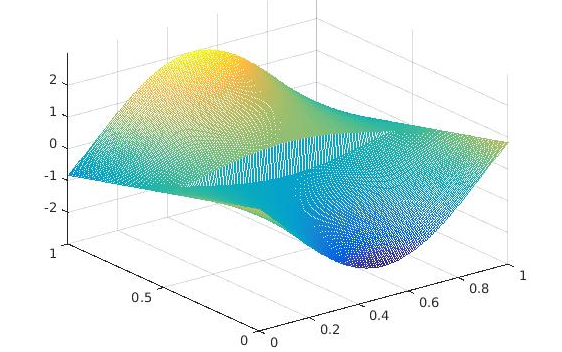}
}
\subfloat[$t=3, \tau=0.001$]
{
\includegraphics[height=3.2cm, width=3.5cm]{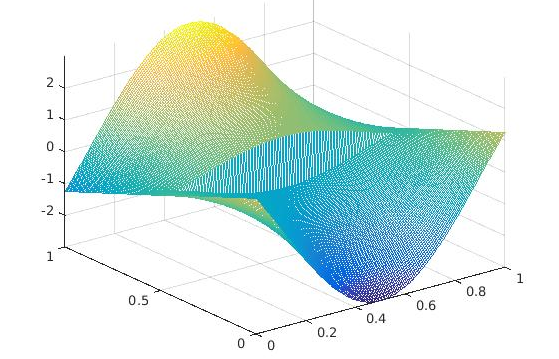}
}
\newline
\hspace{-5cm}
\subfloat[$t=0.4, \tau=0.0001$]
{
\includegraphics[height=3.2cm, width=3.5cm]{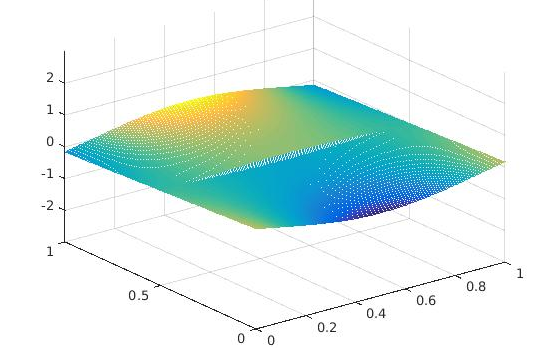}
}
\subfloat[$t=2, \tau=0.0001$]
{
\includegraphics[height=3.2cm, width=3.5cm]{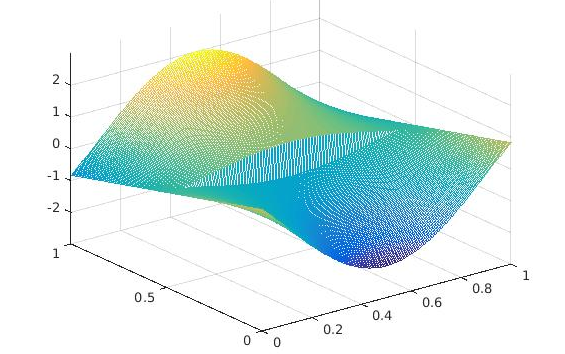}
}
\subfloat[$t=3, \tau=0.0001$]
{
\includegraphics[height=3.2cm, width=3.5cm]{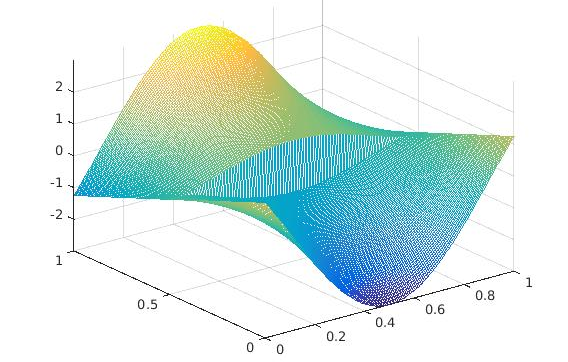}
}
\caption{Displacement, $\theta(\cdot)=|\cdot|_\tau^\tau$, comparison between $\tau=0.001$ and $\tau=0.0001$, $R^1=R^2=I$, boundary datum $g=g_2$.}
\end{figure}

%
%
%
%
%

\vspace{-0.4cm}

\begin{figure}[!ht]
\subfloat[$t=0.9, \tau=0.01$]
{
\includegraphics[height=3.2cm, width=3.5cm]{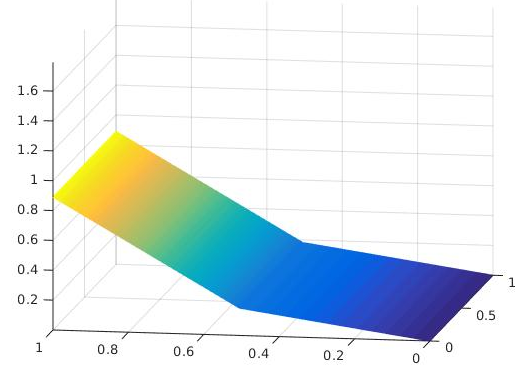}
}
\subfloat[$t=1.1, \tau=0.01$]
{
\includegraphics[height=3.2cm, width=3.5cm]{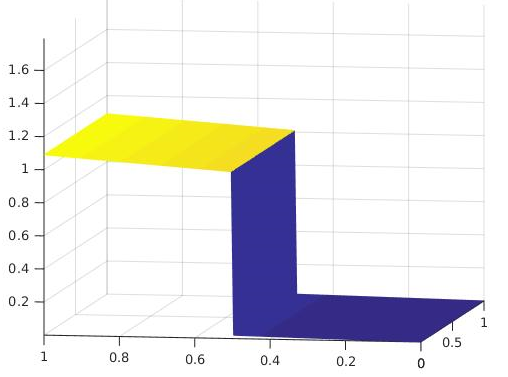}
}
\subfloat[$t=1.5, \tau=0.01$]
{
\includegraphics[height=3.2cm, width=3.5cm]{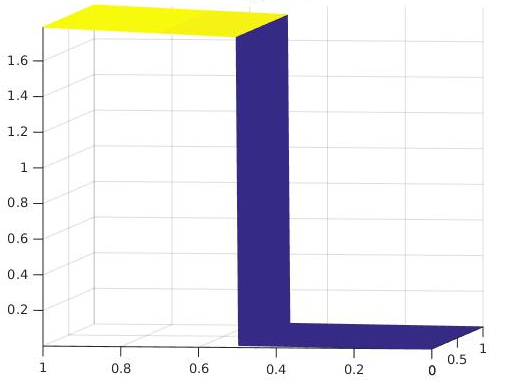}
}
\caption{Displacement, $\theta(\cdot)=|\cdot|_\tau^\tau$,  $\tau=0.01$, $R^1, R^2$ given by \eqref{R1}-\eqref{R2}, boundary datum $g=g_1$.}
\end{figure}

\vspace{-0.4cm}

\begin{figure}[!ht]
\subfloat[$t=1.5, \tau=0.1$]
{
\includegraphics[height=3.2cm, width=3.5cm]{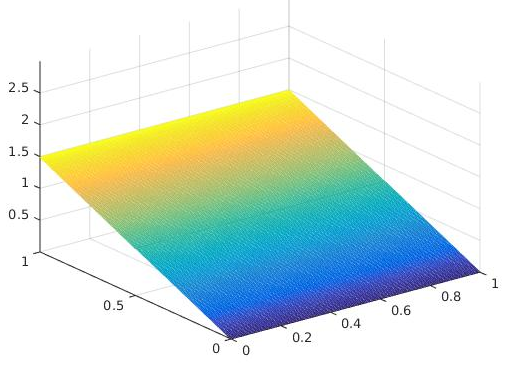}
}
\subfloat[$t=2, \tau=0.1$]
{
\includegraphics[height=3.2cm, width=3.5cm]{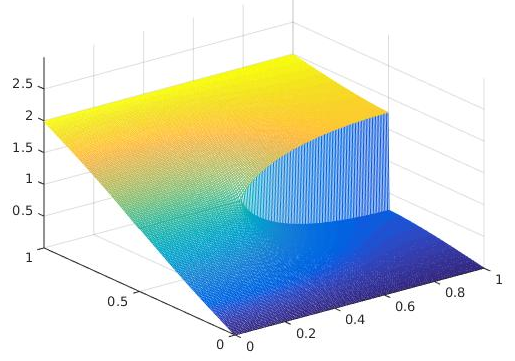}
}
\subfloat[$t=3, \tau=0.1$]
{
\includegraphics[height=3.2cm, width=3.5cm]{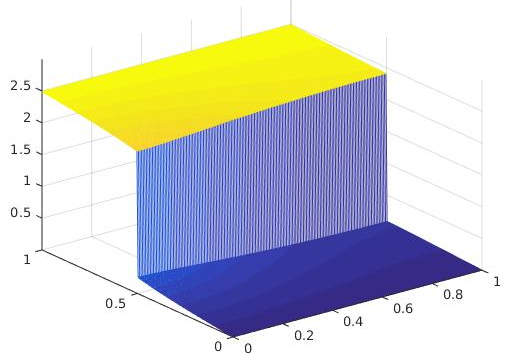}
}
\caption{Displacement, $\theta(\cdot)=|\cdot|_\tau^\tau$, $\tau=0.01, R^1,R^2$ given by  \eqref{Rr},  boundary datum $g=g_1$}
\end{figure}

\begin{figure}[!ht]
\subfloat[$t=0.2, \tau=0.1$]
{
\includegraphics[height=3.5cm, width=3.5cm]{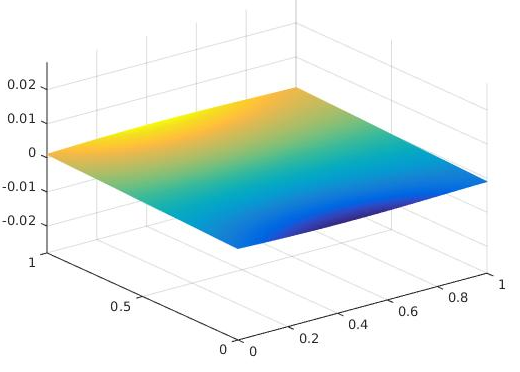}
}
\subfloat[$t=1.5, \tau=0.1$]
{
\includegraphics[height=3.5cm, width=3.5cm]{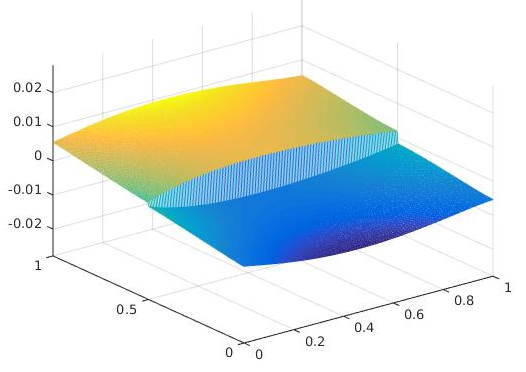}
}
\subfloat[$t=3, \tau=0.1$]
{
\includegraphics[height=3.5cm, width=3.5cm]{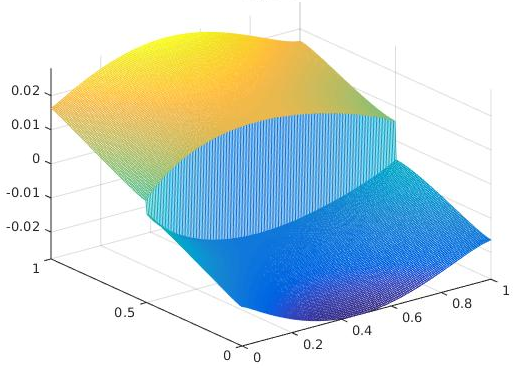}
}
\caption{Displacement, $\theta(\cdot)=|\cdot|_\tau^\tau$,  $\tau=0.1$, $R^1,R^2$ given by \eqref{Rr}, boundary datum $g=g_3$}
\end{figure}

\begin{figure}[!ht]
\centering
\includegraphics[height=3cm, width=16cm]{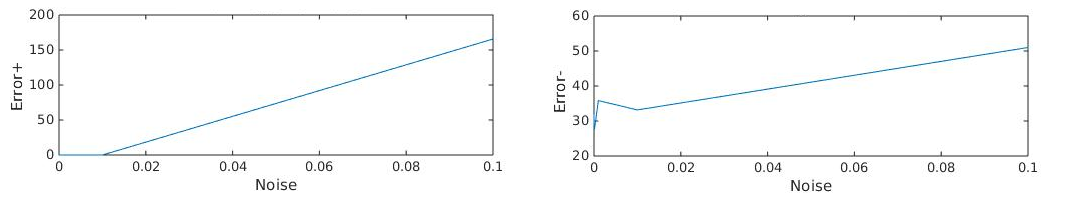}

\caption{Error+ (surplus of emitters), Error- (missed emitters) against noise. Results obtained by \textbf{Algorithm 1}, $\tau=.5, \lambda=10^{-6}$. }
\end{figure}
\begin{figure}
\centering
\includegraphics[height=3cm, width=16cm]{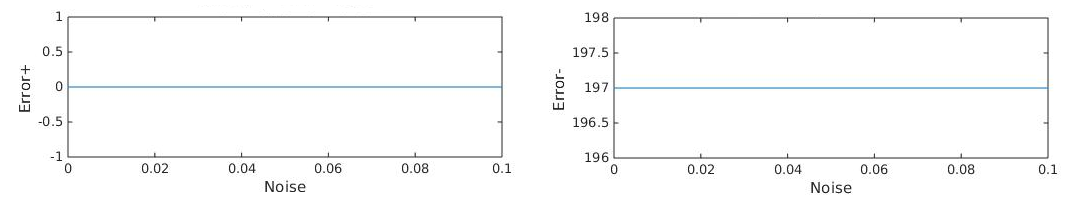}

\caption{Error+ (surplus of emitters), Error- (missed emitters) against noise. Results obtained by GIST, $\tau=.5, \lambda=10^{-6}$.}
\end{figure}

\begin{figure}[!ht]
\centering

\includegraphics[height=3cm, width=16cm]{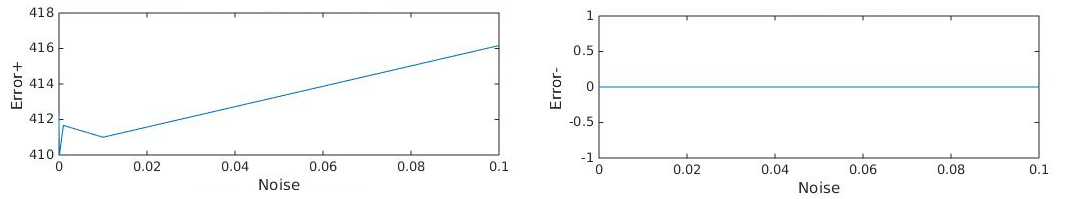}

\caption{Error+ (surplus of emitters), Error- (missed emitters) against noise. Results obtained by the FISTA, $ \lambda=10^{-6}$.}
\end{figure}




\end{document}